\theoremstyle{definition}
\newtheorem{theorem}{Theorem}[section]
\newtheorem{theoremx}{Theorem}
\newtheorem*{theorem*}{Theorem}
\numberwithin{equation}{section}
\newtheorem{corollary}[theorem]{Corollary}
\newtheorem{lemma}[theorem]{Lemma}
\newtheorem{proposition}[theorem]{Proposition}
\theoremstyle{definition}
\newtheorem{definition}[theorem]{Definition}
\newtheorem{example}[theorem]{Example}
\newtheorem*{conjecture*}{Berger's Conjecture}
\newtheorem{remark}[theorem]{Remark}
\newtheoremstyle{TheoremNum}
{8pt}{8pt}              
{\upshape}                      
{}                              
{\bfseries}                     
{.}                             
{.5em}                             
{\theoremname{#1}\theoremnote{ \bfseries #3}}
\theoremstyle{TheoremNum}
\newcommand{\m}{\mathfrak{m}}
\newcommand{\n}{\mathfrak{n}}
\renewcommand{\(}{\left(}
\renewcommand{\)}{\right)}
\newcommand{\CC}{\mathbb{C}}
\newcommand{\cC}{\mathfrak{C}}
\newcommand{\Rank}{\operatorname{rank}}
\newcommand{\Hom}{\operatorname{Hom}}
\newcommand{\Tor}{\operatorname{Tor}}
\newcommand{\Ann}{\operatorname{Ann}}
\newcommand{\depth}{\operatorname{depth}}
\newcommand{\h}{\operatorname{h}}
\newcommand{\hun}{\displaystyle \h(\Omega_{R/k})}
\renewcommand{\leq}{\leqslant}
\renewcommand{\geq}{\geqslant}
\newcommand{\ds}{\displaystyle}
\newcommand{\tens}{\otimes}
\DeclareMathOperator{\chr}{char}
\title{Partial Trace Ideals And Berger's Conjecture}
\author{Sarasij Maitra}
\date{}
\email{sm3vg@virginia.edu}
\address{University of Virginia, Charlottesville, VA}
\subjclass[2010]{Primary: 13N05, 13N15, 13Cxx . Secondary: 13H10, 13D02, 13E99}
\keywords{Module of Differentials, Berger's Conjecture, reduced curves, Trace Ideals}
\begin{document}
	\begin{abstract}
		For any finitely generated module $M$ with non-zero rank over a commutative one-dimensional Noetherian local domain, we study a numerical invariant $\h(M)$ based on a partial trace ideal of $M$.
		We study its properties and explore relations between this invariant and the colength of the conductor. Finally we apply this to the universally finite module of differentials $\Omega_{R/k}$, where $R$ is a complete $k$-algebra with $k$ any perfect field, to study a long-standing conjecture due to R. W. Berger.  
	\end{abstract}
	
\maketitle

\section*{Introduction}	Let $k$ be a perfect field and let $R$ be
a reduced local $k$-algebra of dimension one. There is a long standing conjecture of R.W. Berger that $R$ is regular if and only if the universally finite differential module $\ds \Omega_{ R/k}$ is torsion-free. The difficult part is to show that for a singular $R$, there is a nonzero torsion element in $\Omega_{ R/k}$. 

A lot of results, all supporting the conjecture, exist in the literature.  They use a variety of techniques involving multiplicity, linkage, embedding dimension, deviations, smoothability, maximal torsion, equisingularity, quadratic transforms and Artinian methods. The reader can refer to numerous sources like  \cite{MR152546}, \cite{scheja1970differentialmoduln}, \cite{MR470245}, \cite{MR512469}, \cite{MR571575}, \cite{MR629286}, \cite{MR754335}, \cite{Herzog1984DifferentialsOL},  \cite{MR836868}, \cite{MR948266}, \cite{pohl1989torsion1}, \cite{pohl1989torsion}, \cite{Guttes1990}, \cite{MR1035346}, \cite{MR1129516}, \cite{MR1129517}, \cite{MR1637796} and \cite{MR1679664}. A very nice summary of most of these results, along with the main ideas of proofs, can be found in \cite{berger1994report}.

  In this paper, we focus on the case when $R$ is a complete local domain. Our approach is to introduce an invariant $\h(M)$ for any finitely generated module $M$ with a non-zero rank over a one-dimensional local Noetherian domain. It is defined as follows: 
  \[\h(M):=\min \{\lambda(R/J)~|~\exists~\text{homomorphism~}f:M\to R\text{~such that~}J=f(M)\}\] where $\lambda(\cdot)$ denotes length as an $R$-module.  We refer to any such $J$ that achieves the above minimum as a \textit{partial trace ideal of $M$}. For any ideal $I$, we call $\lambda(R/I)$  the colength of $I$. Thus, a partial trace ideal of $M$ is one which achieves the minimum colength among all the images of $M$ in $R$.
  
  We study various properties of this invariant and explore the relationship with the conductor ideal under suitable hypothesis. Finally we apply this to $M=\Omega_{ R/k}$ and study the conjecture in terms of  bounds of this invariant. 
  We give a brief discussion (see \Cref{h=0}) about the case when the invariant takes the value $0$. In this case, the conjecture of interest becomes quite easy to analyze. The major points of discussion, thus, will be the instances when $\hun$ is positive. The motivation for the definition came from Scheja's proof of  Berger's conjecture in the graded setup. One of our main results is the following (see \Cref{mainthm}):
\begin{theoremx}\label{MAINTHM}
Let	$\ds (R,\m ,k)$ be a Noetherian complete local one-dimensional domain such that $\ds R=S/I$ where $S=k[[X_1,\dots,X_n]]$, $k$ is perfect and $n\geq 2$. Let $\n$ be the maximal ideal of $S$ and $\ds I\subseteq \n^{s+1}$ for $s\geq 1$. If  $\ds \h(\Omega_{R/k})< {{n+s\choose s}\frac{s}{s+1}}$, then $\ds  \Omega_{ R/k}$ has torsion. In particular, Berger's Conjecture is true whenever $\hun$ satisfies the stated bound.
\end{theoremx}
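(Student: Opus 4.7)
My plan is to establish the contrapositive: assuming $\Omega_{R/k}$ is torsion-free, prove $\h(\Omega_{R/k}) \geq \binom{n+s}{s}\frac{s}{s+1}$. Since $R$ is a one-dimensional domain over the perfect field $k$, the module $\Omega_{R/k}$ has generic rank one, so any nonzero homomorphism $f \colon \Omega_{R/k} \to R$ is injective, and any partial trace ideal $J$ satisfies $J \cong \Omega_{R/k}$ as $R$-modules. Via the canonical identification $\operatorname{Hom}_R(\Omega_{R/k}, R) \cong \operatorname{Der}_k(R, R)$, such an $f$ corresponds to a $k$-derivation $D \colon R \to R$ with $J = (D(x_1), \ldots, D(x_n))$, where $x_i$ is the image of $X_i$ in $R$.

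Next, I would exploit $I \subseteq \mathfrak{n}^{s+1}$: every entry of the Jacobian matrix in the presentation $R^m \to R^n \to \Omega_{R/k} \to 0$ lies in $\mathfrak{m}^s$, so reducing modulo $\mathfrak{m}^s$ kills the matrix and gives $\Omega_{R/k}/\mathfrak{m}^s \Omega_{R/k} \cong (R/\mathfrak{m}^s)^n$. Transporting through $J \cong \Omega_{R/k}$, we find that $J$ is minimally generated by $n$ elements $y_1, \ldots, y_n$ whose first syzygy module is contained in $\mathfrak{m}^s R^n$. Since the Koszul syzygies $y_i e_j - y_j e_i$ always lie in the first syzygy module, each $y_i \in \mathfrak{m}^s$, hence $J \subseteq \mathfrak{m}^s$ and, using $\lambda(R/\mathfrak{m}^s) = \binom{n+s-1}{s-1}$, the initial estimate $\lambda(R/J) \geq \binom{n+s-1}{s-1}$ follows. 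To sharpen the bound, I would introduce the $k$-linear lift $\tilde D = \sum_i y_i \,\partial/\partial X_i \colon S \to R$, which (using $\tilde D(\mathfrak{n}^{s+1}) \subseteq \mathfrak{m}^s J$) induces the $k$-linear map
\[
\bar D \colon S/\mathfrak{n}^{s+1} \longrightarrow J/\mathfrak{m}^s J
\]
with source of $k$-dimension $\binom{n+s}{s}$, and analyze its graded pieces $\bar D_j \colon \mathfrak{n}^j/\mathfrak{n}^{j+1} \to (\mathfrak{m}^{j-1}/\mathfrak{m}^j)^n$. For $j=1$, $\bar D_1$ is the bijection $X_i \mapsto y_i$; for higher $j$, the torsion-freeness of $\Omega_{R/k}$ should constrain $\ker \bar D_j$ by restricting which linear dependencies among the $\tilde D(X^\alpha)$ are possible (the latter are governed by the relations coming from $I$).

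The main obstacle is to convert these graded-piece considerations into the exact factor $\frac{s}{s+1}$. This mirrors Scheja's graded argument, where the Euler derivation furnishes a canonical splitting and a clean degree count closes the proof; in the complete local setting no Euler derivation is available, so the degree-wise comparison must be replicated intrinsically, presumably via $\operatorname{gr}_{\mathfrak{m}}(R)$ and the initial forms of the $y_i$. Bridging the gap from a soft estimate such as $\lambda(R/J) \geq \binom{n+s}{s} - n$ (obtained by measuring the $n$ generators of $J$ inside the slice $\mathfrak{m}^s/\mathfrak{m}^{s+1}$ of $k$-dimension $\binom{n+s-1}{s}$ and applying Pascal's identity) to the sharp bound $\binom{n+s}{s}\frac{s}{s+1}$ is where I expect the substantive technical content to lie.
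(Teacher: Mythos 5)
Your contrapositive setup and the argument that any partial trace ideal $J$ (which is isomorphic to $\Omega_{R/k}$ once torsion-freeness is assumed, since the kernel of a surjection between rank-one modules is torsion) lands inside $\m^s$ --- because the Koszul relation $y_ie_j-y_je_i$ must be an $R$-combination of the columns of the Jacobian matrix, whose entries lie in $\m^s$ when $I\subseteq\n^{s+1}$ --- is exactly the paper's \Cref{lem2}, and the resulting soft estimates $\lambda(R/J)\geq\binom{n+s-1}{s-1}$, resp.\ $\lambda(R/J)\geq\binom{n+s}{s}-n$, are correct. But the heart of the theorem, namely producing the factor $\frac{s}{s+1}$, is missing, as you yourself acknowledge. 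The sketch you offer in its place (the lift $\tilde D=\sum y_i\,\partial/\partial X_i$, the induced map $\bar D\colon S/\n^{s+1}\to J/\m^s J$, and an analysis of its graded pieces) is not carried out, and no mechanism is identified by which torsion-freeness would constrain $\ker\bar D_j$ strongly enough to yield the sharp count; as written it is a program, not a proof.

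The paper closes this gap by a homological (syzygy) count, not by graded or derivation-theoretic considerations. Tensoring $0\to J\to\m^s\to\m^s/J\to 0$ with $k$ gives $\dim_k\Tor^R_1(\m^s/J,k)\geq\dim_k\Tor^R_1(\m^s,k)-\dim_k\Tor^R_1(J,k)$. Three inputs are then combined: (i) a composition-series argument (\Cref{lengthlemma}) gives $\dim_k\Tor^R_1(\m^s/J,k)\leq n\,\lambda(\m^s/J)=n\bigl(\h(\Omega_{R/k})-\binom{n+s-1}{s-1}\bigr)$; (ii) since $J\cong\Omega_{R/k}$ in the torsion-free case, the conormal presentation $I/I^2\to R^n\to\Omega_{R/k}\to 0$ yields $\dim_k\Tor^R_1(J,k)\leq\mu(I)$ (\Cref{rankomega}(c)); and (iii) the decisive computation, \Cref{lem1}: resolving $S/\n^s$ by the Eagon--Northcott complex of the $s\times(n+s-1)$ matrix of shifted copies of a regular system of parameters and descending to $R$, one shows $\dim_k\Tor^R_1(\m^s,k)=s\binom{n+s-1}{s+1}+\mu(I)$. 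The two occurrences of $\mu(I)$ cancel, leaving $n\bigl(\h(\Omega_{R/k})-\binom{n+s-1}{s-1}\bigr)\geq s\binom{n+s-1}{s+1}$, which is precisely $\h(\Omega_{R/k})\geq\binom{n+s}{s}\frac{s}{s+1}$. Without an analogue of (iii) --- an exact count of the minimal first syzygies of $\m^s$ together with the bound on how many of them can be absorbed by $J$ --- your graded-piece analysis has no source for the $\frac{s}{s+1}$, so the proposal as it stands establishes only the weaker bounds you state, not the theorem.
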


This provides a generalization of a proof due to G. Scheja (see \Cref{corScheja}). We also find a relationship between $\hun$ and the colength of the conductor. The main idea of the proof here goes through studying colengths of isomorphic copies of ideals. We establish an equivalent condition for an \textit{optimal colength} to be achieved  under suitable hypothesis (see \Cref{prop.propo1}).

\begin{theoremx}\label{MAINTHM2}
	Let $\ds (R,\m,k)$ be a one-dimensional Noetherian local domain with integral closure $\overline{R}$ and fraction field $K$. Further assume that $\ds \widehat{R}$ is reduced. For any ideal $J$ of $R$,  consider the following statements:
	\begin{itemize}
		\item[$(a)$] $\ds \h(J)=\lambda\({R}/{J}\)$.
		
		\item[$(b)$] $\ds R:_KJ\subseteq \overline{R}.$
	\end{itemize}  
	Then $(b)$ implies $(a)$. Further if $\ds \overline{R}$ is a $DVR$, then $(a)$ implies $(b)$.
	
\end{theoremx}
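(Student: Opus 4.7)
The plan is to reformulate $\h(J)$ using the standard identification $\Hom_R(J,R) \cong R:_K J$, available for any nonzero ideal $J$ of the domain $R$: a homomorphism $f : J \to R$ is multiplication by the element $\alpha := f(x)/x \in K$ for any fixed nonzero $x \in J$, and its image is $\alpha J \subseteq R$. Under this correspondence,
\[
\h(J) = \min\{\lambda(R/\alpha J) \mid 0 \neq \alpha \in R:_K J\},
\]
so statement $(a)$ becomes the assertion that this minimum is attained at $\alpha = 1$, i.e., realized by the inclusion $J \hookrightarrow R$.

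The workhorse will be a length comparison. Since $\widehat{R}$ is reduced, $\overline{R}$ is module-finite over $R$ and $\lambda(\overline{R}/R) < \infty$. For any nonzero ideal $I \subseteq R$, combining the inclusion chains $I \subseteq R \subseteq \overline{R}$ and $I \subseteq I\overline{R} \subseteq \overline{R}$ gives the identity
\[
\lambda(R/I) = \lambda(\overline{R}/I\overline{R}) + \lambda(I\overline{R}/I) - \lambda(\overline{R}/R).
\]
I would apply this with $I = J$ and $I = \alpha J$. Multiplication by $\alpha$ is an $R$-module isomorphism $J\overline{R}/J \xrightarrow{\sim} \alpha J\overline{R}/\alpha J$, so those terms cancel in the subtraction and yield the core identity
\[
\lambda(R/\alpha J) - \lambda(R/J) = \lambda(\overline{R}/\alpha J\overline{R}) - \lambda(\overline{R}/J\overline{R}).
\]
Both implications then reduce to comparing the $\overline{R}$-ideals $\alpha J\overline{R}$ and $J\overline{R}$.

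The direction $(b) \Rightarrow (a)$ is essentially free: when $\alpha \in \overline{R}$ we have $\alpha J\overline{R} \subseteq J\overline{R}$, so the right-hand side of the core identity is non-negative, and $\lambda(R/\alpha J) \geq \lambda(R/J)$ for every admissible $\alpha$. The minimum $\h(J) = \lambda(R/J)$ is therefore realized at $\alpha = 1$.

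For the converse, assuming $\overline{R}$ is a DVR with valuation $v$ and uniformizer $\pi$, I would argue the contrapositive. If $\alpha \in R:_K J$ has $\alpha \notin \overline{R}$, then $v(\alpha) < 0$, so $\alpha J\overline{R} = \pi^{v(\alpha)} J\overline{R}$ strictly contains $J\overline{R}$; this forces $\lambda(\overline{R}/\alpha J\overline{R}) < \lambda(\overline{R}/J\overline{R})$ (both finite because $\overline{R}/\pi\overline{R}$ is a finite extension of $R/\m$), and the core identity then gives $\lambda(R/\alpha J) < \lambda(R/J)$, so $(a)$ fails. The DVR hypothesis is exactly what makes this comparison clean: without a single valuation governing $\overline{R}$, an element outside $\overline{R}$ need only have negative valuation at some of the maximal ideals of $\overline{R}$, and the length bookkeeping would require more care. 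That last step is where I expect the main technical attention to go.
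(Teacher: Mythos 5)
Your proof is correct, and its overall strategy coincides with the paper's: in both arguments the point is that passing from $R$ to $\overline{R}$ turns the comparison of colengths of the isomorphic ideals $J$ and $\alpha J$ into a containment statement inside $\overline{R}$, with the DVR hypothesis supplying the strict inequality when $\alpha\notin\overline{R}$. The difference is in the bookkeeping. The paper writes $\alpha=a/b$ with $a,b\in R$, uses additivity to get $\lambda(R/J)+\lambda(R/aR)=\lambda(R/\alpha J)+\lambda(R/bR)$, and then invokes a separate lemma (\Cref{prop.lem1}), proved via Hilbert--Samuel multiplicity and the fact that $\overline{R}$ is MCM of rank one, to convert $\lambda(R/aR)$ and $\lambda(R/bR)$ into $\lambda(\overline{R}/a\overline{R})$ and $\lambda(\overline{R}/b\overline{R})$. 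You instead compare the extended ideals directly, through the identity $\lambda(R/I)=\lambda(\overline{R}/I\overline{R})+\lambda(I\overline{R}/I)-\lambda(\overline{R}/R)$ and the multiplication-by-$\alpha$ isomorphism $J\overline{R}/J\cong\alpha J\overline{R}/\alpha J$, arriving at $\lambda(R/\alpha J)-\lambda(R/J)=\lambda(\overline{R}/\alpha J\overline{R})-\lambda(\overline{R}/J\overline{R})$. This buys you a completely elementary, self-contained argument (no multiplicity theory); indeed, taking $I=xR$ in your identity and using $x\overline{R}/xR\cong\overline{R}/R$ recovers \Cref{prop.lem1} itself. What the paper's route buys is a reusable standalone lemma and a shorter proof given standard machinery. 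The only points to make explicit in a polished write-up are the finiteness of all lengths involved ($\lambda(\overline{R}/R)<\infty$ because $\widehat{R}$ reduced forces $\overline{R}$ to be module-finite of rank one, and $J\neq 0$ so $\lambda(R/J)<\infty$), which you do note, and that the theorem should be read for nonzero $J$, as in \Cref{prop.propo1}.
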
 
\noindent In statement $(b)$ above, $R:_KJ:=\{\alpha\in K\mid \alpha J\subseteq R\}$. Whether $(a)$ implies $(b)$ without the $DVR$ assumption may be interesting to investigate. Since in this article we eventually apply our results to the setup of Berger's Conjecture,  $\overline{R}$ being a $DVR$ is enough to proceed.

The statements in \Cref{MAINTHM2} in fact suggest a link to the conductor ideal $\cC$ which is the largest common ideal between $R$ and $\overline{R}$ (see \Cref{prop.cor1}, \Cref{prop.thm1}).
  
  We also provide an explicit way of computing $\hun$ under some additional hypothesis, which can be efficiently implemented on Macaulay 2 to check the applicability of \Cref{MAINTHM} given an instance of the conjecture (see \Cref{explicitcomp}, \Cref{hcomprem2} and  \Cref{explicitcompcor}). Here we use $v$ to denote the discrete valuation on $K=k((t))$ which takes a power series in $t$ as input and outputs the least exponent of $t$ in the series. We call it the \textit{valuation function} on  $\bar{R}=k[[t]]$. The hypothesis guarantees that $\overline{R}$ and $K$ can be written in the forms indicated above where $t$ is the chosen uniformizing parameter.
  
  \begin{theoremx}\label{MAINTHM4}
 	Let	$\ds (R,\m ,k)$ be a Noetherian complete local one-dimensional domain such that $\ds R=S/I$ where $S=k[[X_1,\dots,X_n]]$, $k$ is algebraically closed of characteristic $0$, $n\geq 2$ and let $K$ be the fraction field of $R$. Assume $\ds I\subseteq (X_1,\dots,X_n)^{s+1}$ for $s\geq 1$. Suppose $\overline{R}=k[[t]]$ with valuation function $v$, which is a discrete valuation on $K$. Write $x_i(t)$ to denote the image of $X_i$ in $\overline{R}$ and set $x_i'(t)=\frac{d}{dt}x_i(t)$.  Finally let $\mathcal{D}=Rx_1'(t)+\dots +Rx_n'(t)$ and $v(\mathcal{D}^{-1})=\min\{v(\alpha)~|~\alpha\in K, \alpha \mathcal{D}\subseteq R\}$. If $$v(\mathcal{D}^{-1})< {n+s\choose s}\frac{s}{s+1}+\lambda(\overline{R}/R)-\lambda(\overline{R}/\mathcal{D}),$$\text{~then} $\Omega_{ R/k}$ has torsion. So, Berger's Conjecture is true in this case.
 \end{theoremx}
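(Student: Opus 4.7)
The plan is to reduce the statement to \Cref{MAINTHM} by establishing the explicit formula
\[
\hun \;=\; v(\mathcal{D}^{-1}) \;+\; \lambda(\overline{R}/\mathcal{D}) \;-\; \lambda(\overline{R}/R).
\]
Once this identity is in hand, the stated hypothesis on $v(\mathcal{D}^{-1})$ translates exactly into $\hun < \binom{n+s}{s}\tfrac{s}{s+1}$, and \Cref{MAINTHM} immediately concludes that $\Omega_{R/k}$ has torsion.

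First, I would parametrize $\Hom_R(\Omega_{R/k}, R)$ via derivations. Since $k$ is perfect and $R$ is complete, the universal property of the universally finite module of differentials gives $\Hom_R(\Omega_{R/k}, R) \cong \operatorname{Der}_k(R, R)$. Given $g \in R :_K \mathcal{D}$, I would construct an $R$-linear $f : \Omega_{R/k} \to R$ by setting $f(dx_i) := g\, x_i'(t) \in R$; this is well-defined because the Jacobian relations $\sum_i \partial_i F(x)\, dx_i = 0$ (for $F \in I$) are preserved, as
\[
\sum_i \partial_i F(x) \cdot g\, x_i'(t) \;=\; g \cdot \tfrac{d}{dt}\bigl(F(x_1(t), \ldots, x_n(t))\bigr) \;=\; g \cdot \tfrac{d}{dt}(0) \;=\; 0.
\]
Conversely, any $k$-derivation $D : R \to R \subseteq K$ extends uniquely (by the quotient rule) to $\widetilde{D} : K \to K$, and restricting to $\overline{R} = k[[t]]$ yields $\widetilde{D}|_{\overline{R}} = g \cdot \tfrac{d}{dt}$ for a unique $g \in K$ (since $\operatorname{Der}_k(\overline{R}, K) = K \cdot \tfrac{d}{dt}$ in characteristic zero), and $D(R) \subseteq R$ forces $g \in R :_K \mathcal{D}$. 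This yields a bijection $\Hom_R(\Omega_{R/k}, R) \leftrightarrow R :_K \mathcal{D}$ whose corresponding image is
\[
f(\Omega_{R/k}) \;=\; (D(x_1), \ldots, D(x_n))R \;=\; g \cdot \mathcal{D}.
\]

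For the length computation, let $m = v(\mathcal{D})$, so that $\mathcal{D}\overline{R} = t^m \overline{R}$ and $g\mathcal{D}\overline{R} = t^{v(g) + m}\overline{R}$ with $v(g) + m \geq 0$ (since $g\mathcal{D} \subseteq R \subseteq \overline{R}$). Multiplication by $g$ is an $R$-isomorphism $\mathcal{D}\overline{R}/\mathcal{D} \xrightarrow{\sim} g\mathcal{D}\overline{R}/g\mathcal{D}$, so length-additivity in the chain $g\mathcal{D} \subseteq g\mathcal{D}\overline{R} \subseteq \overline{R}$ gives $\lambda(\overline{R}/g\mathcal{D}) = v(g) + \lambda(\overline{R}/\mathcal{D})$. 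Length-additivity in $g\mathcal{D} \subseteq R \subseteq \overline{R}$ then yields
\[
\lambda(R/g\mathcal{D}) \;=\; v(g) + \lambda(\overline{R}/\mathcal{D}) - \lambda(\overline{R}/R).
\]
Minimizing over $g \in R :_K \mathcal{D}$ and invoking $v(\mathcal{D}^{-1}) = \min_g v(g)$ produces the asserted identity for $\hun$, and \Cref{MAINTHM} finishes the argument.

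The step I expect to be most delicate is nailing down the bijection $\operatorname{Der}_k(R, R) \leftrightarrow R :_K \mathcal{D}$ cleanly: surjectivity rests on the Jacobian-relation check above, which uses characteristic zero together with $\overline{R} = k[[t]]$ so the chain rule applies, and injectivity uses uniqueness of the derivation extension to $K$. Everything else reduces to finite-length arithmetic over the DVR $\overline{R}$, so once the bijection is carefully established the proof is essentially a direct computation feeding into \Cref{MAINTHM}.
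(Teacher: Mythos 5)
Your strategy coincides with the paper's: the identity $\hun=v(\mathcal{D}^{-1})+\lambda(\overline{R}/\mathcal{D})-\lambda(\overline{R}/R)$ is exactly the paper's \Cref{explicitcomp}, and the theorem is then, as you say, immediate from \Cref{mainthm} (this is \Cref{explicitcompcor}). The difference lies in how the images of maps $\Omega_{R/k}\to R$ are matched with $\{g\mathcal{D}\ :\ g\in R:_K\mathcal{D}\}$. The paper only uses the chain-rule surjection $\Omega_{R/k}\twoheadrightarrow\mathcal{D}$ (\Cref{hcomprem1}) together with the rank-one observation that every ideal image of $\Omega_{R/k}$ is isomorphic to $\Omega_{R/k}/\tau(\Omega_{R/k})\cong\mathcal{D}$, hence of the form $\frac{a}{b}\mathcal{D}$, and then does essentially the same DVR length bookkeeping you do (via \Cref{prop.lem1} and the MCM rank-one formula rather than your direct computation over $\overline{R}$ --- these are equivalent). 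You instead parametrize $\Hom_R(\Omega_{R/k},R)\cong\operatorname{Der}_k(R,R)$; here the one shaky step is the assertion $\operatorname{Der}_k(\overline{R},K)=K\cdot\frac{d}{dt}$, which is false for arbitrary (non-continuous) derivations $k[[t]]\to k((t))$ even in characteristic zero, so your converse direction needs an extra ingredient: either Seidenberg's theorem that the extension $\widetilde{D}$ maps $\overline{R}$ into $\overline{R}$ (derivations of $k[[t]]$ into itself are automatically $t$-adically continuous, hence multiples of $\frac{d}{dt}$), or the paper's factorization of any map $\Omega_{R/k}\to R$ through the torsion-free rank-one quotient $\mathcal{D}$ combined with \Cref{convention_remark}. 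This gap is harmless for the theorem as stated, because only the inequality $\hun\le v(\mathcal{D}^{-1})+\lambda(\overline{R}/\mathcal{D})-\lambda(\overline{R}/R)$ is needed to feed the hypothesis into \Cref{mainthm}, and that direction is fully established by your explicit construction $dx_i\mapsto g\,x_i'(t)$ for a $g$ attaining $v(\mathcal{D}^{-1})$; but if you want the exact formula for $\hun$ (as in \Cref{explicitcomp}) you should repair the converse as indicated.
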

\noindent In fact, the expression $\lambda(\overline{R}/R)-\lambda(\overline{R}/\mathcal{D})$ is always non-negative and can be computed by \textit{counting the number of missing valuations from the value semi-groups of $R$ and $\mathcal{D}$ respectively} (\cite[Proposition 2.9]{herzog1971wertehalbgruppe}). There exist significant discussions in the literature on the case when this quantity is equal to zero: this is referred to as \textit{maximal torsion}. Berger's conjecture is resolved in this situation (see \cite[2.2]{berger1994report}).

The structure of this paper is as follows.  In \Cref{invariant}, we define the invariant $\h(\cdot)$ and study important properties of it. We  establish the relationship with the conductor ideal by proving \Cref{MAINTHM2} and \Cref{prop.thm1}. In \Cref{sec_traceideal}, we relate the invariant to the trace ideal of the module under consideration and provide a lower bound on $\h(\cdot)$ under suitable hypothesis. We also show that a regular trace ideal always achieves its minimal colength among all its isomorphic copies (see \Cref{trace_prop}). In \Cref{invariantapp}, we study $\hun$ in detail and prove \Cref{MAINTHM}. In \Cref{explicitsec}, we prove \Cref{MAINTHM4} and also illustrate the computations using examples (\Cref{example2}, \Cref{example3}, \Cref{example4}). In particular, \Cref{example2} shows that this approach can lead to settling the conjecture for certain classes of rings (also, see \Cref{maincor}). We recover G. Scheja's proof for instance (see \Cref{corScheja}).  We conclude
by mentioning a few further questions.

\section{Preliminaries}\label{prelim}
Throughout the paper, $R$ will be a commutative local ring with unity. By $(R,\m,k)$, we denote a Noetherian local ring with maximal ideal $\m$ and residue field $k$. Let $\widehat{R}$ denote the $\m$-adic completion of $R$. For any module $M$, we will denote by $\lambda(M)$, the length of  $M$ (whenever length makes sense) and $\mu(M)$ will denote the minimal number of generators of $M$. All modules $M$ considered here will be finitely generated. The Hilbert-Samuel multiplicity of $M$ with respect to any $\m-$primary ideal ${I}$ will be denoted by $e({I};M)$. A module $M$ is Maximal Cohen Macaulay (MCM) if $\depth(M)=\dim(M)=\dim(R)$.

For the majority of this paper we shall only require $(R,\m,k)$ to be a one-dimensional local Noetherian domain with $\widehat{R}$ reduced, which guarantees that the integral closure $\bar{R}$ is finite over $R$. Also, because $R$ is a domain, the rank of any module $M$ is simply the dimension of $M\tens_R K$ over $K$ where $K$ is the fraction field of $R$. 

Since we are interested in the question of Berger's Conjecture, the prototypical example that the reader can keep in mind throughout is the following: Let $k$ be a perfect field.  Assume that $(R,\m,k)$ is a $k$-algebra which is a complete local one-dimensional domain of embedding dimension $n$, i.e., $\mu(\m)=n$. By the Cohen structure theorem, $\ds R=k{[[X_1,X_2,\dots,X_n]]}/{I}$ for some prime ideal $I$ in $\ds S=k[[X_1,\dots,X_n]]$ with $I\subseteq (X_1,\dots, X_n)^2$.
Let $K$ be the fraction field of $R$ and $\ds \overline{R}$ be the integral closure of $R$ in $K$.

Let $\omega$ denote the canonical module of $R$ (which exists if we assume that $R$ is the quotient of a regular local ring as above, and can be identified with some ideal of $R$ \cite[Proposition 3.3.18]{bruns_herzog_1998}). For a detailed exposition on the various important properties of $\omega$, we refer the reader to \cite[Chapter 3]{bruns_herzog_1998}.

Our object of interest is the module of differentials of $R$. For a detailed description of this module, we refer the reader to \cite{MR864975}. In particular, the `universally finite module of differentials' and the module of differentials are not the same in  general. However, for the purposes of this paper, we can work with the following definition.

\begin{definition}\label{defOmega}
Let $R=S/I$ where $S=k[[X_1,\dots,X_n]]$ or $S=k[X_1,\dots,X_n]$ where $k$ is any field. The universally finite module of differentials of $R$, denoted $\ds \Omega_{ R/k}$, is the finitely generated $R$-module which has the following presentation: 
$$R^{\mu(I)}\xrightarrow{A} R^n\to \Omega_{ R/k}\to 0$$ where the matrix $A$ over $R$ is given as follows: if $I=(f_1,\dots,f_{\mu(I)})$, then  $ A=\({\frac{\partial f_j}{\partial X_i}}\)_{1\leq j\leq \mu(I)\atop 1\leq i\leq n}$.
\end{definition}

 Taking into account  the formal partial derivations $dX_i$, we can also describe $\Omega_{ R/k}$ as follows: if $\ds I=(f_1,\dots,f_m)$, then $ \Omega_{R/k}=\frac{\bigoplus_{i=1}^nRdX_i}{U}$ where $U$ is the $R$-module generated by the elements $ df_i= \sum\limits_{i=1}^n\frac{\partial f_j}{\partial X_i}dX_i, j=1,\dots,m$.

\begin{remark}\label{rankomega}
When $k$ is a perfect field, the following results are known.
\begin{itemize} 
\item[a)] $\ds \mu(\Omega_{ R/k})=n$ \cite[Corollary 6.5]{MR864975};
\item[b)] $\Rank_R(\Omega_{ R/k})=1$ (follows from Corollary 4.22, Proposition 5.7b and Theorem 5.10c, \cite{MR864975}).
\item[c)] There exists an exact sequence $ \ds I/I^{2}\to R^n\to \Omega_{ R/k}\to 0$ \cite[Theorem 1.5]{herzog1994module}. Let $\Gamma/I^2$ be the kernel of the first map. Tensoring the sequence $\ds 0\to I/\Gamma\to R^n\to \Omega_{ R/k}\to 0$  with $k$ and looking at the long exact sequence, we immediately obtain $\ds \dim_{k}\Tor^R_1(\Omega_{ R/k},k)\leq \mu(I)$. 
\end{itemize}
\end{remark}
Let $\ds \tau(\Omega_{R/k})$ denote the torsion submodule of $\Omega_{ R/k}$. We now explicitly state the Berger's conjecture in the  case when $R$ is a domain.

\begin{conjecture*}
Let $k$ be a perfect field and let $R$ be a complete one-dimensional local $k$-algebra which is a domain. Then $R$ is regular if and only if $\ds \tau(\Omega_{R/k})=0$.
\end{conjecture*}

	The more general statement of the conjecture only requires $R$ to be reduced. For the  general version, see the introduction in \cite{berger1994report}.

From \Cref{defOmega}, it is clear that if $R$ is regular, i.e., $n=1$, then $\ds \Omega_{R/k}$ is free. So $\tau(\Omega_{R/k})=0$. It is the converse direction that we need to study. Hence, from now on we assume that $n\geq 2$. Thus we shall be concerned mainly with the following situation:
			 $k$ is a perfect field, $\ds R=S/I$ where $S=k[[X_1,\dots,X_n]]$, $n\geq 2
			$,  $I$ is a prime ideal in $S$ with $I\subseteq \n^2$ where $\n$ is the maximal ideal of $S$ and $\text{height}(I)=n-1$.

Recall that for	a local ring $(R,\m,k)$, if  $E$ is the injective hull of $k$,  then for any module $M$, the \textit{Matlis Dual} of $M$ is defined as $\ds M^{\vee}:=\Hom_R(M,E).$ 
We refer the reader to \cite[Section 3.2]{bruns_herzog_1998} for more information on injective hulls and Matlis duals.

	 As we proceed through the paper, in every section we will set up additional notations and conventions whenever necessary and also make all the hypothesis explicit for convenience.

\section{An Invariant $\h(\cdot)$ and Some Properties}\label{invariant}

As mentioned in the Introduction, we shall study the conjecture by studying an invariant. The purpose of this section is to define this invariant and study some of its properties. 
Recall that  $\ds \lambda(M)$ denotes the length of an $R$-module $M$. Throughout this section, $(R,\m,k)$ denotes a one-dimensional Noetherian local ring. We shall explicitly mention any additional hypothesis. 

\begin{definition}\label{definv} Let $R$ be a local Noetherian one-dimensional domain. For any finitely generated $R$-module $M$ with a non-zero rank, let
	$$\h(M):=\min\{\lambda(R/J)~|~\text{there exists a homomorphism~} \phi:M\to R~\text{such that~}\phi(M)=J\}.$$
	\noindent We say that an ideal \textbf{\textit{$\ds J$ realizes $M$}}  if $M$ surjects to $J$ and $\ds \h(M)=\lambda(R/J)$.
\end{definition}

The assumption that $R$ is a one-dimensional domain ensures that $\ds \h(M)$ is finite whenever $M$ is not a torsion-module. Any ideal $J$ that realizes $M$ can be referred to as a \textit{partial trace ideal of $M$} (see \Cref{sec_traceideal}).

 We can restrict our study of $\h(\cdot)$ to ideals. More explicitly, for any ideal ${I}$ in a one-dimensional Noetherian local domain $R$ with quotient field $K$, we have 
\begin{align*}\label{defgeninv}
\ds \h({I}):=\min\{\lambda(R/J)~|~{I}\cong J\}.
\end{align*}

\begin{remark}\label{rem3.2}
	Two ideals $I$, $J$ are isomorphic means that there exists $\ds \alpha\in K$ such that $\ds I=\alpha J$ \cite[Lemma 2.4.1]{MR2266432}. (More generally, the same statement holds when $I$ and $J$ are $R$--submodules of $K$.)
\end{remark}

\begin{remark}\label{rem3.1}
	Note that for any finitely generated $R$-module $M$ with a non-zero rank, if $J$ realizes $M$ for some ideal $J$, then $\ds \h(J)=\h(M)$. In this case, also note that $J$ realizes itself.
\end{remark}

\begin{lemma}\label{prop.lem1}
	Let $\ds (R,\m,k)$ be a one-dimensional Noetherian local domain with integral closure $\overline{R}$. Further assume that $\ds \widehat{R}$ is reduced and let $x$ be a non-zero element in $R$. Then $\ds \lambda\({R}/{xR}\)=\lambda\({\overline{R}}/{x\overline{R}}\)$.	
\end{lemma}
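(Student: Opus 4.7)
The plan is to deduce the equality from the snake lemma applied to multiplication by $x$ on the short exact sequence $0\to R\to \overline{R}\to \overline{R}/R\to 0$, using that $\overline{R}/R$ has finite length.

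First I would verify two basic facts. Since $\widehat{R}$ is reduced, the integral closure $\overline{R}$ is module-finite over $R$ (this is noted in the Preliminaries section of the paper). Because $R$ and $\overline{R}$ share the fraction field $K$ and $R$ is one-dimensional, the finitely generated torsion $R$-module $\overline{R}/R$ is supported only at $\m$ and hence has finite length as an $R$-module. Also, $x$ is a non-zero divisor on both $R$ and on $\overline{R}$ (as $\overline{R}$ is a domain, being contained in $K$).

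Next I would write down the commutative diagram
\[
\begin{array}{ccccccccc}
0 & \to & R & \to & \overline{R} & \to & \overline{R}/R & \to & 0 \\
& & \downarrow\cdot x & & \downarrow\cdot x & & \downarrow\cdot x & & \\
0 & \to & R & \to & \overline{R} & \to & \overline{R}/R & \to & 0
\end{array}
\]
and apply the snake lemma. Since multiplication by $x$ is injective on $R$ and on $\overline{R}$, the snake sequence reduces to
\[
0\to \ker(x|_{\overline{R}/R})\to R/xR\to \overline{R}/x\overline{R}\to (\overline{R}/R)/x(\overline{R}/R)\to 0.
\]
Additivity of length across this exact sequence gives
\[
\lambda(R/xR)-\lambda(\overline{R}/x\overline{R})=\lambda(\ker(x|_{\overline{R}/R}))-\lambda(\mathrm{coker}(x|_{\overline{R}/R})).
\]

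Finally, I would invoke the standard fact that for any finite length $R$-module $N$ and any $R$-linear endomorphism $\phi$ of $N$, the exact sequence $0\to \ker\phi\to N\xrightarrow{\phi} N\to \mathrm{coker}\,\phi\to 0$ combined with additivity forces $\lambda(\ker\phi)=\lambda(\mathrm{coker}\,\phi)$. Applying this to $\phi=\cdot x$ on the finite length module $\overline{R}/R$, the right-hand side above vanishes, yielding $\lambda(R/xR)=\lambda(\overline{R}/x\overline{R})$. The main point requiring care here is the finite-length property of $\overline{R}/R$; everything else is routine diagram chasing and bookkeeping of lengths.
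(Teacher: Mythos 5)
Your proof is correct, but it takes a genuinely different route from the paper's. The paper argues via multiplicity theory: since $\overline{R}$ is a finite birational extension, it is a rank-one maximal Cohen--Macaulay $R$-module, so $\lambda(\overline{R}/x\overline{R})=e(x;\overline{R})=e(x;R)\cdot\Rank_R\overline{R}=e(x;R)=\lambda(R/xR)$, using the associativity formula and the equality of multiplicity and colength for a parameter in the one-dimensional Cohen--Macaulay setting (\cite[Corollary 4.6.11]{bruns_herzog_1998}). You instead apply the snake lemma to multiplication by $x$ on $0\to R\to\overline{R}\to\overline{R}/R\to 0$ and use that an endomorphism of a finite-length module has kernel and cokernel of equal length; the only input from the hypothesis that $\widehat{R}$ is reduced is the module-finiteness of $\overline{R}$, which you correctly cite, and your verification that $\overline{R}/R$ has finite length (finitely generated torsion module over a one-dimensional local domain) is the one point that needed care and is handled properly. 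Your argument is more elementary, avoiding Hilbert--Samuel multiplicities altogether, and it makes clear that the statement holds verbatim for any module-finite birational extension $R\subseteq B\subseteq K$; the paper's argument is shorter given the cited machinery and fits the multiplicity-theoretic toolkit used elsewhere, and it likewise extends to rank-one MCM modules in place of $\overline{R}$. Both proofs are complete and correct.
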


\begin{proof}
	Since $\widehat{R}$ is reduced, $\ds \overline{R}$ is a finite $R$-module by  \cite[Corollary 4.6.2]{MR2266432}. Since $R$ and $\overline{R}$ have the same fraction field, $\Rank_R(\overline{R})=1$. Since $\ds \overline{R}$ is MCM over $R$, by \cite[Corollary 4.6.11]{bruns_herzog_1998}, we have $$\ds \lambda\({\overline{R}}/{x\overline{R}}\)=e(x;\overline{R})=e(x;R)\Rank_R\overline{R}=e(x;R)=\lambda\({R}/{xR}\).\hfill \qedhere$$
\end{proof}

\begin{theorem}\label{prop.propo1}
	Let $\ds (R,\m,k)$ be a one-dimensional Noetherian local domain with integral closure $\overline{R}$ and fraction field $K$. Further assume that $\ds \widehat{R}$ is reduced. For any non-zero ideal $J$ of $R$, consider the following statements:
	\begin{itemize}
		\item[$(a)$] $\ds \h(J)=\lambda\({R}/{J}\).$
		
		\item[$(b)$] $\ds R:_KJ\subseteq \overline{R}.$
	\end{itemize}  
	Then $(b)$ implies $(a)$. Further if $\ds \overline{R}$ is a $DVR$, then $(a)$ implies $(b)$.
	
\end{theorem}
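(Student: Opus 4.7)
The plan is to parametrize the isomorphic copies of $J$ inside $K$. By \Cref{rem3.2}, every ideal isomorphic to $J$ has the form $\alpha J$ with $\alpha\in K^{*}$ and $\alpha J\subseteq R$, i.e.\ $\alpha\in R:_K J$. Thus $(a)$ is equivalent to asking that $\lambda(R/J)\leq \lambda(R/\alpha J)$ for every $\alpha\in R:_K J$, and the proof reduces to producing a clean formula for the difference $\lambda(R/\alpha J)-\lambda(R/J)$.

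The key identity is a length decomposition through $\overline{R}$. For a nonzero ideal $M\subseteq R$, chaining $M\subseteq M\overline{R}\subseteq \overline{R}$ against $M\subseteq R\subseteq\overline{R}$ and computing $\lambda_R(\overline{R}/M)$ in two ways gives
\[\lambda_R(R/M)=\lambda_R(\overline{R}/M\overline{R})+\lambda_R(M\overline{R}/M)-\lambda_R(\overline{R}/R).\]
All terms are finite: $\lambda_R(\overline{R}/R)$ because $\overline{R}$ is a finite $R$-module (since $\widehat{R}$ is reduced), and the other two because both $M\overline{R}$ and $M\overline{R}/M$ are annihilated modulo $M$ by the conductor. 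Multiplication by $\alpha$ is an $R$-isomorphism $J\overline{R}/J\xrightarrow{\sim}\alpha J\overline{R}/\alpha J$, so the middle term is invariant under $J\mapsto \alpha J$. Subtracting the identity for $M=\alpha J$ from the one for $M=J$ yields
\[\lambda(R/\alpha J)-\lambda(R/J)=\lambda_R(\overline{R}/\alpha J\overline{R})-\lambda_R(\overline{R}/J\overline{R}).\]

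Next I would exploit that $\overline{R}$ is a one-dimensional semilocal Dedekind domain. Let $P_1,\dots,P_r$ be its maximal ideals with corresponding discrete valuations $v_1,\dots,v_r$ on $K$. For any ideal $\a=\prod_i P_i^{n_i}$ of $\overline{R}$, the Chinese Remainder Theorem gives $\lambda_R(\overline{R}/\a)=\sum_i n_i\,\lambda_R(\overline{R}/P_i)$, and each $\lambda_R(\overline{R}/P_i)=[\overline{R}/P_i:k]$ is a positive integer. Writing $v_i(\alpha J\overline{R})=v_i(\alpha)+v_i(J\overline{R})$ and combining with the previous display yields the master formula
\[\lambda(R/\alpha J)-\lambda(R/J)=\sum_{i=1}^{r}v_i(\alpha)\,\lambda_R(\overline{R}/P_i).\]

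From this both implications fall out. If $(b)$ holds, then every $\alpha\in R:_K J$ lies in $\overline{R}$, so $v_i(\alpha)\geq 0$ for all $i$ and the right-hand side is non-negative, giving $(a)$. Conversely, under the DVR hypothesis $r=1$, so the inequality $\lambda(R/\alpha J)\geq \lambda(R/J)$ is equivalent to $v_1(\alpha)\geq 0$, i.e.\ $\alpha\in \overline{R}$; since $(a)$ enforces this for every $\alpha\in R:_K J$, we recover $(b)$. The main technical obstacle is verifying the length decomposition and the $\alpha$-invariance of $\lambda_R(M\overline{R}/M)$; once these are in place the remaining steps are bookkeeping. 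The failure of $(a)\Rightarrow(b)$ without the DVR hypothesis is transparent from the master formula: when $r>1$, a negative $v_i(\alpha)$ can be outweighed by positive contributions from other valuations (weighted by the possibly unequal $\lambda_R(\overline{R}/P_i)$), so $(a)$ can hold without $\alpha\in\overline{R}$.
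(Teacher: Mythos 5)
Your proof is correct, but it reaches the conclusion by a genuinely different route than the paper. The paper's pivot is \Cref{prop.lem1} (the equality $\lambda(R/xR)=\lambda(\overline{R}/x\overline{R})$, proved via Hilbert--Samuel multiplicity and the fact that $\overline{R}$ is MCM of rank one over $R$): writing an isomorphic copy as $I=\frac{a}{b}J$, it computes $\lambda(R/aJ)=\lambda(R/bI)$ in two ways to get $\lambda(R/I)-\lambda(R/J)=\lambda(R/aR)-\lambda(R/bR)$, transfers this to $\overline{R}$ by the lemma, and concludes by comparing $a\overline{R}$ with $b\overline{R}$, the DVR hypothesis entering only to force the strict containment $b\overline{R}\subsetneq a\overline{R}$ in the converse direction. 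You instead derive the exact formula $\lambda(R/\alpha J)-\lambda(R/J)=\sum_i v_i(\alpha)\,\lambda_R(\overline{R}/P_i)$ via the two-chain colength decomposition through $\overline{R}$, the $\alpha$-invariance of $\lambda_R(J\overline{R}/J)$, and factorization of ideals in the semilocal Dedekind domain $\overline{R}$ (all legitimate: $\overline{R}$ is module-finite over $R$ because $\widehat{R}$ is reduced, so every length you use is finite and each residue field $\overline{R}/P_i$ is finite over $k$). The two computations carry equivalent information---with $\alpha=a/b$ your right-hand side is exactly $\lambda(\overline{R}/a\overline{R})-\lambda(\overline{R}/b\overline{R})$---but your derivation avoids multiplicity theory entirely and recovers \Cref{prop.lem1} as the special case $J=R$, $\alpha=x\in R$; it also makes the role of the DVR hypothesis transparent (when $r=1$ the sign of the difference is the sign of $v_1(\alpha)$) and explains structurally why $(a)\Rightarrow(b)$ could conceivably fail when $\overline{R}$ has several maximal ideals, a point the paper leaves as an open question. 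What the paper's argument buys in exchange is brevity: it needs only the single principal-ideal lemma and no ideal theory in $\overline{R}$ beyond finiteness.
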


\begin{proof}
	First, suppose $\ds R:_KJ\subseteq \overline{R}$. Suppose on the contrary that $\ds \h(J)=\lambda(R/I)<\lambda(R/J)$ for some ideal $I\cong J$. By \Cref{rem3.2}, there exists $a,b\in R$ such that $ I=\frac{a}{b}J$, or equivalently $\ds bI=aJ$. Now, $ \frac{a}{b}J=I\subseteq R$, and hence by assumption we have $ \frac{a}{b}\in \overline{R}$. We have 
	\begin{equation}\tag{\ref{prop.propo1}.1}\label{prop.eq1}
	\lambda\({R}/{J}\)+\lambda\({R}/{aR}\)=\lambda\({R}/{aJ}\)=\lambda\({R}/{bI}\)=\lambda\({R}/{I}\)+\lambda\({R}/{bR}\)
	\end{equation}
	Since $\lambda(R/I)<\lambda({R/J})$, we have $\lambda(R/aR)<\lambda(R/bR)$. By \Cref{prop.lem1}, we have $\lambda(\overline{R}/a\overline{R})<\lambda(\overline{R}/b\overline{R})$ but this is a contradiction as $\frac{a}{b}\in \overline{R}$. This finishes the first part of the proof. 
	
	For the remaining part, suppose $\ds \h(J)=\lambda(R/J)$ and let $\frac{a}{b}\in R:_KJ$. Suppose $\ds a\not \in b\overline {R}$. Since $\overline{R}$ is a $DVR$, we have $\ds b\overline{R}\subsetneq  a\overline{R}$. Set $I=\frac{a}{b}J$. By assumption, we have $\lambda(R/I)\geq \lambda(R/J)$. Hence, from \Cref{prop.eq1}, we get $\lambda(R/aR)\geq \lambda(R/bR)$ which by \Cref{prop.lem1}, implies that $\lambda(\overline{R}/a\overline{R})\geq \lambda(\overline{R}/b\overline{R})$. This is a contradiction to the fact that $\ds b\overline{R}\subsetneq  a\overline{R}$. This completes the proof.
\end{proof}
It should be interesting to explore whether $(a)$ implies $(b)$ in \Cref{prop.propo1} without the $DVR$ assumption. 

In light of the above theorem, we shall see that the conductor ideal naturally appears in the subsequent discussion. We quickly recall its definition first. 

For any reduced ring $R$ with total ring of fractions $K$, the conductor ideal of $R$ in $\overline{R}$ is  $\cC:=R:_K\overline{R}$ (the integral closure is taken in $K$).
It is the largest common ideal of $R$ and $\overline{R}$. It has a non-zero-divisor whenever $\overline{R}$ is finitely generated over $R$.

\begin{corollary}\label{prop.cor1}
	Suppose $\ds R$ is a one-dimensional Noetherian local domain with integral closure $\overline{R}$, fraction field $K$ and conductor $\cC$. Further assume that $\ds \widehat{R}$ is reduced. If $\cC\subseteq J$ for any ideal $J$, then $\ds \h(J)=\lambda(R/J)$. In particular, $\ds \h(\cC)=\lambda(R/\cC)$.
\end{corollary}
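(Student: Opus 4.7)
The plan is to apply \Cref{prop.propo1}, specifically the direction $(b) \Rightarrow (a)$. It therefore suffices to verify that $R :_K J \subseteq \overline{R}$ whenever $\cC \subseteq J$. A first reduction: the inclusion $\cC \subseteq J$ immediately yields $R :_K J \subseteq R :_K \cC$, since any $\alpha \in K$ with $\alpha J \subseteq R$ satisfies $\alpha \cC \subseteq \alpha J \subseteq R$. Thus I am reduced to proving the single inclusion $R :_K \cC \subseteq \overline{R}$, which is essentially the universal property of the conductor repackaged. Once that is in hand, the ``in particular'' statement follows instantly by specializing to $J = \cC$.

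The key input for the remaining inclusion is the standard fact that $\cC$ is simultaneously an ideal of $R$ and of $\overline{R}$, and indeed is the largest common ideal of the two. Fix $\alpha \in R :_K \cC$, so that $\alpha \cC$ is an $R$-submodule of $R$. The crucial observation I would record is that $\alpha \cC$ is in fact an $\overline{R}$-submodule of $R$: for any $r \in \overline{R}$ and $c \in \cC$ we have $r(\alpha c) = \alpha(rc) \in \alpha \cC$, using that $rc \in \cC$ because $\cC$ is an $\overline{R}$-ideal. Since $\cC$ is the largest $\overline{R}$-submodule of $R$, I conclude $\alpha \cC \subseteq \cC$.

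To finish, I would invoke the determinant trick. Because $\widehat{R}$ is reduced, $\overline{R}$ is module-finite over $R$, so $\cC$ contains a non-zero-divisor; combined with the Noetherian hypothesis, $\cC$ is a finitely generated faithful $R$-module. The inclusion $\alpha \cC \subseteq \cC$ says that multiplication by $\alpha$ is an $R$-linear endomorphism of $\cC$, and the Cayley--Hamilton / determinant trick produces a monic polynomial over $R$ satisfied by $\alpha$. Hence $\alpha \in \overline{R}$, as desired. I do not anticipate any genuine obstacle in carrying this out; the only subtlety worth double-checking is the bookkeeping that $\alpha \cC$ is truly stable under $\overline{R}$, which is what allows the maximality of the conductor to kick in.
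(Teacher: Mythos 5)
Your proposal is correct and follows essentially the same route as the paper: both reduce via $R:_K J\subseteq R:_K\cC$ to showing $R:_K\cC\subseteq\overline{R}$, and both get this by observing $\alpha\cC\subseteq\cC$ (the paper via $\alpha\cC\subseteq R:_K\overline{R}=\cC$, you via maximality of the conductor) so that $\alpha\in\cC:_K\cC\subseteq\overline{R}$. Your determinant-trick justification of that last inclusion merely fills in a standard fact the paper cites without proof.
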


\begin{proof}
	It is well-known that $\ds \overline{R}=R:_K\cC$. We still present a proof here: 
	$ \overline{R}\subseteq R:_K\cC$ since $\cC\overline{R}\subseteq R$. For the other inclusion, note that for any $\alpha\in R:_K\cC$, we have $\alpha \in {R}:_K\cC\overline{R}$ and hence $\alpha\in \cC:_K\cC\subseteq \overline{R}$.
	
	The proof now follows from $(b)$ implies $(a)$ of \Cref{prop.propo1}. 
\end{proof}

\begin{corollary}\label{trace_cor_1}
	Let $\ds R$ be a one-dimensional Noetherian local domain with integral closure $\overline{R}$ and fraction field $K$. Further assume that $\ds \widehat{R}$ is reduced and $\overline{R}$ is a $DVR$. Let $M$ be a finitely generated  $R$-module with a non-zero rank and let  $J$ be an ideal that realizes $M$. Then for any ideal ${I}$ which contains $J$, we have $\ds \h({I})=\lambda\({R}/{{I}}\)$.
\end{corollary}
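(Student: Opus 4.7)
The plan is to reduce the statement directly to \Cref{prop.propo1}, exploiting both directions of the equivalence which are available since $\overline{R}$ is a DVR. First, because $J$ realizes $M$, \Cref{rem3.1} gives $\h(J)=\h(M)=\lambda(R/J)$, so $J$ itself satisfies condition $(a)$ of \Cref{prop.propo1}. Using the $DVR$ assumption, the $(a)\Rightarrow(b)$ direction of \Cref{prop.propo1} yields
\[
R:_K J\subseteq \overline{R}.
\]

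Next, I would exploit the obvious monotonicity of the colon operation: if $I\supseteq J$, then any $\alpha\in K$ with $\alpha I\subseteq R$ satisfies $\alpha J\subseteq \alpha I\subseteq R$, hence
\[
R:_K I\subseteq R:_K J\subseteq \overline{R}.
\]
Thus $I$ itself satisfies condition $(b)$ of \Cref{prop.propo1}. Applying the $(b)\Rightarrow(a)$ direction (which does not require the $DVR$ hypothesis) gives $\h(I)=\lambda(R/I)$, as desired.

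There is really no obstacle here: the key insight is just that condition $(b)$ is preserved under enlarging the ideal, and the $DVR$ hypothesis is only needed once, to produce the colon containment from $J$. The proof is essentially one observation combined with the equivalence established in \Cref{prop.propo1}, so I would write it up as a short two-line argument invoking the theorem twice.
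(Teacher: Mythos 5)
Your proof is correct and follows exactly the same route as the paper: use \Cref{rem3.1} and the definition of realizing to see that $J$ satisfies condition $(a)$ of \Cref{prop.propo1}, invoke the $(a)\Rightarrow(b)$ direction (where the DVR hypothesis enters) to get $R:_K J\subseteq \overline{R}$, observe $R:_K I\subseteq R:_K J$ since $J\subseteq I$, and conclude with the $(b)\Rightarrow(a)$ direction. There is nothing to add or correct.
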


\begin{proof}
	By \Cref{rem3.1}, we have $\ds \h(M)=\h(J)$. By \Cref{prop.propo1}, we have $\ds R:_KJ\subseteq \overline{R}$. Since $\ds J\subseteq {I}$, we have $\ds R:_K{I}\subseteq R:_KJ\subseteq \overline{R}$. Another application of \Cref{prop.propo1} finishes the proof.
\end{proof}

\begin{remark}\label{convention_remark}
	Let $R$ be a domain with fraction field $K$. For any two $R$-submodules $I_1,I_2$ of $K$, we know by  \cite[Lemma 2.4.2]{MR2266432}, $\ds I_1:_KI_2\cong \Hom_R(I_2,I_1)$ where the isomorphism is as $R$-modules. 
	The proof of this isomorphism shows that the identification is as follows:
	any $\phi \in \Hom_R(I_2,I_1)$ {corresponds to multiplication by} ${\phi(x)}/{x}$ for some non-zero-divisor $x$ in $I_2$. 
	Henceforth in this paper, we are going to identify these two $R$-submodules of $K$ and use them interchangeably by abuse of notation. We use the notation $I^{-1}$ to denote $R:_K I$.
\end{remark}

\begin{lemma}\label{ann_lem}
	Let $R$ be a one-dimensional Cohen Macaulay local ring with canonical module $\omega$ and let $J$ be an ideal which contains a non-zero-divisor. Then 
	$$\ds \Ann_R\({\omega}/{J\omega}\)= x:_R(x:_RJ)$$ for every non-zero-divisor $x$ in $J$.

\end{lemma}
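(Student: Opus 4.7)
The plan is to reduce modulo $x$ to an Artinian setting and apply Matlis duality. Set $A := R/xR$ and $\bar{\omega} := \omega/x\omega$. Since $R$ is one-dimensional Cohen-Macaulay and $x$ is a non-zero-divisor on both $R$ and $\omega$ (the latter because $\omega$ is maximal Cohen-Macaulay), $A$ is a zero-dimensional Cohen-Macaulay local ring and $\bar{\omega}$ is its canonical module, which in this Artinian setting coincides with the injective hull $E_A(k)$ of the residue field. In particular $\bar{\omega}$ is a faithful $A$-module and $\Hom_A(\bar{\omega},\bar{\omega}) = A$. Writing $\bar{J} := J/xR$, the identification $\omega/J\omega = \bar{\omega}/\bar{J}\bar{\omega}$ as $A$-modules reduces the problem to computing $\Ann_A(\bar{\omega}/\bar{J}\bar{\omega})$: since $x \in J$, the preimage of this ideal in $R$ is exactly $\Ann_R(\omega/J\omega)$.

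Next, I would apply $\Hom_A(-,\bar{\omega})$ to the short exact sequence
$$0 \to \bar{J}\bar{\omega} \to \bar{\omega} \to \bar{\omega}/\bar{J}\bar{\omega} \to 0,$$
and use $\Hom_A(\bar{\omega},\bar{\omega}) = A$ to identify the Matlis dual $(\bar{\omega}/\bar{J}\bar{\omega})^{\vee}$ with $\{a \in A : a\bar{J}\bar{\omega}=0\}$; faithfulness of $\bar{\omega}$ collapses this to $0:_A\bar{J}$. Under the correspondence between ideals of $A$ and ideals of $R$ containing $xR$, this is precisely $(xR :_R J)/xR$.

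Finally, Matlis duality over the Artinian local ring $A$ is exact, length-preserving, and involutive on finite modules; a formal consequence is that it preserves annihilators, i.e.\ $\Ann_A(M) = \Ann_A(M^{\vee})$ (combine the trivial containment $\Ann_A(M) \subseteq \Ann_A(M^{\vee})$ with its application to $M^{\vee}$ in light of $M^{\vee\vee} \cong M$). Therefore
$$\Ann_A(\bar{\omega}/\bar{J}\bar{\omega}) \;=\; \Ann_A(0:_A \bar{J}) \;=\; \bigl(xR :_R (xR :_R J)\bigr)/xR,$$
and pulling back to $R$ yields $\Ann_R(\omega/J\omega) = x:_R(x:_R J)$, as required. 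The only technical input of note is the faithfulness of $\bar{\omega}$ together with the self-Hom identification $\Hom_A(\bar{\omega},\bar{\omega})=A$; both are standard properties of the canonical module in the Artinian local case, so no real obstacle arises. A pleasant byproduct of the argument is that the right-hand side is manifestly independent of the choice of non-zero-divisor $x \in J$, since the left-hand side is.
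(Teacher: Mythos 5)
Your proposal is correct and follows essentially the same route as the paper: reduce modulo the non-zero-divisor $x$ so that $\omega/J\omega$ becomes $E/\bar{J}E$ over the Artinian ring $A=R/xR$, identify the Matlis dual of this module with $0:_A\bar{J}$, and use that Matlis duality preserves annihilators to land on $0:_A(0:_A\bar{J})=\bigl(x:_R(x:_RJ)\bigr)/xR$. The only (immaterial) difference is that you compute the dual by applying $\Hom_A(-,\bar{\omega})$ to the short exact sequence and invoking faithfulness of $\bar{\omega}$, whereas the paper gets the same identification via Hom-tensor adjointness and $\Hom(E,E)=A$.
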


\begin{proof}
	First choose a non-zero-divisor $x\in J$. Let $\overline{\cdot}$ denote going modulo $xR$. Then $$\frac{\omega}{J\omega}\cong \frac{\omega_{R/xR}}{\overline{J}\omega_{R/xR}}\cong \frac{E}{\overline{J}E}$$ where $\ds E$ denotes the injective hull of $k$ as an $R/xR$ module. This follows from 
	\cite[Theorem 3.3.4, Theorem 3.3.5]{bruns_herzog_1998}.	
	Hence we may assume that $R$ is zero dimensional. It is now enough to show that the annihilator is $0:_R(0:_R J)$ (we  use $J$ again instead of writing $\overline{J}$, for convenience). (Note that this reduction follows because for any $x\in J$, $(x:_R J)/xR=0:_{R/xR}(J/xR)$, and we can apply it twice to get $(x:_R(x:_R J))/xR = 0:_{R/xR}(0:_{R/xR}(J/xR))$).
	\begin{align*}
	\Ann\({E}/{JE}\)&=\Ann\(\({E}/{JE}\)^\vee\) & \text{\cite[10.2.2(ii)]{brodmann2012local}}\\
	&=\Ann\(\Hom_R(E\tens R/J,E)\)\\
	&=\Ann\(\Hom_R(R/J,\Hom_R(E,E)\) & (\text{Hom-tensor adjointness})\\
	&= \Ann(\Hom_R(R/J,R))\\
	&=0:_R(0:_R J).  \qedhere
	\end{align*}
\end{proof}

\begin{theorem}\label{prop.thm1}
	Let $\ds R$ be a one-dimensional Noetherian local domain with integral closure $\overline{R}$, fraction field $K$ and conductor $\cC$. Further assume that $\ds \widehat{R}$ is reduced, $\overline{R}$ is a $DVR$ and the canonical module $\omega$ exists. Then for  any ideal $J$ of $R$, the following statements are equivalent.
	
	\begin{itemize}
		\item[$(a)$] $\ds \h(J)=\lambda\({R}/{J}\).$
		
		\item[$(b)$] $\cC\subseteq x:_R\(x:_R J\)$ for some $x \in J$.
		
		\item[$(c)$] $\ds \cC\omega\subseteq J\omega$.
	\end{itemize}
In particular, if $R$ is Gorenstein, then $\ds \cC\subseteq J$ for any $J$ which realizes itself. Thus if $R$ is Gorenstein, then for any finitely generated $R$-module $M$ of positive rank, we get $\h(M)\leq \h(\cC)=\lambda(R/\cC)$.
\end{theorem}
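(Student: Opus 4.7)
The plan is to route through Theorem \ref{prop.propo1}, which under the $DVR$ hypothesis gives (a) equivalent to $R:_K J \subseteq \overline{R}$. Recalling from the proof of Corollary \ref{prop.cor1} that $\overline{R} = R:_K \mathcal{C}$, this inclusion is equivalent to $(R:_K J)\cdot \mathcal{C} \subseteq R$, and hence to
\[
\mathcal{C} \subseteq R:_K (R:_K J).
\]
Thus the equivalence of (a) and (b) reduces to proving the identity $R:_K (R:_K J) = x:_R (x:_R J)$ for any non-zero-divisor $x \in J$. I would first record the elementary identity $R:_K I = (1/x)(x:_R I)$ for every fractional ideal $I$ containing a non-zero-divisor $x$; this is a short direct check, since any $s \in R:_K I$ satisfies $sx \in R$ (as $x \in I$) and $sx \in x:_R I$ (as $sI \subseteq R$), so $s = (sx)/x$ lies in the right-hand side. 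Applying this identity once with $I = J$ and a second time with $I = x:_R J$ (noting that $x \in x:_R J$ because $xJ \subseteq xR$), a brief rearrangement yields the required equality.

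The equivalence (b) $\Leftrightarrow$ (c) is then immediate from Lemma \ref{ann_lem}: the inclusion $\mathcal{C}\omega \subseteq J\omega$ says exactly that $\mathcal{C}$ annihilates $\omega/J\omega$, and the lemma identifies this annihilator with $x:_R(x:_R J)$, which is precisely the content of (b).

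For the final Gorenstein claim, $\omega \cong R$ as $R$-modules, so $\omega/J\omega \cong R/J$ and (c) specializes to $\mathcal{C} \subseteq J$ whenever $J$ satisfies (a). Given a finitely generated $R$-module $M$ of positive rank, choose an ideal $J$ that realizes $M$. By Remark \ref{rem3.1}, $J$ itself realizes $J$, so $\h(J) = \lambda(R/J)$; the Gorenstein case then forces $\mathcal{C} \subseteq J$, whence
\[
\h(M) = \lambda(R/J) \leq \lambda(R/\mathcal{C}).
\]
The equality $\h(\mathcal{C}) = \lambda(R/\mathcal{C})$ is already Corollary \ref{prop.cor1}. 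The only delicate step in the whole argument is the double-colon identity $R:_K (R:_K J) = x:_R (x:_R J)$; everything else reduces to a direct application of the quoted results with minor bookkeeping.
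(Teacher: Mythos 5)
Your proof is correct, and it organizes the argument differently from the paper. The paper proves a cycle: $(a)\Rightarrow(b)$ by a direct element computation (for $a\in x:_RJ$ one has $\frac{a}{x}J\subseteq R$, hence $\frac{a}{x}\in\overline{R}$ and $\frac{a}{x}\cC\subseteq R$), then $(b)\Rightarrow(c)$ from \Cref{ann_lem}, and finally $(c)\Rightarrow(a)$ by colon manipulations together with the canonical-module identity $R=\omega:_K\omega$ and \Cref{prop.propo1}. You instead prove $(a)\Leftrightarrow(b)$ entirely at the level of fractional ideals, using $\overline{R}=R:_K\cC$ to rewrite $(a)$ as $\cC\subseteq R:_K(R:_KJ)$ and then the identity $R:_KI=\frac{1}{x}\left(x:_RI\right)$ (applied to $I=J$ and $I=x:_RJ$) to get $R:_K(R:_KJ)=x:_R(x:_RJ)$; and you obtain $(b)\Leftrightarrow(c)$ in both directions at once from \Cref{ann_lem}, since $(c)$ is literally the statement $\cC\subseteq\Ann_R(\omega/J\omega)$. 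The trade-off: your route avoids invoking $\omega:_K\omega=R$ altogether (so the equivalence of $(a)$ and $(b)$ does not even need $\omega$ to exist), at the cost of establishing the double-colon identity, which you verify correctly; the paper's cycle is computationally lighter but leans on the canonical-module duality. Both arguments share the same backbone (\Cref{prop.propo1}, \Cref{ann_lem}, $\overline{R}=R:_K\cC$), and your Gorenstein conclusion, via \Cref{rem3.1} and \Cref{prop.cor1}, coincides with the paper's. One small point, common to both proofs: the statement's ``for some $x\in J$'' must be read with $x\neq 0$ (equivalently a non-zero-divisor) and $J\neq 0$, since your identity and \Cref{ann_lem} require a non-zero-divisor; this is implicit in the paper as well and is not a gap particular to your argument.
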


\begin{proof}
	Assume that statement $(a)$ holds. By \Cref{prop.propo1}, $\ds R:_K J\subseteq \overline{R}$. Let $\ds a\in x:_R J$. Now, $\ds aJ\subseteq (x)$ implies that $\frac{a}{x}J\subseteq R$ and hence, $\frac{a}{x}\in \overline{R}$. So, $\frac{a}{x}\cC\subseteq R$.

	Now assume $(b)$ holds. By \Cref{ann_lem}, we get $\ds \cC\subseteq \Ann_R({\omega}/{J\omega})$. Hence,  $\ds \cC\omega\subseteq J\omega$.
	
	Next let $(c)$ hold. Then we have $\ds \omega:_K J\omega\subseteq \omega:_K \cC\omega$ and this implies $\ds (\omega:_K \omega):_K J\subseteq (\omega:_K \omega):_K \cC$ by properties of colons. Since $R=\omega:_K\omega$ as submodules of $K$ (see \cite[Theorem 3.3.4(d)]{bruns_herzog_1998} and \Cref{convention_remark}), we have $R:_K J\subseteq R:_K \cC=\overline{R}$. This implies statement  $(a)$ by \Cref{prop.propo1}.
	
	When $R$ is Gorenstein, $\omega$ can be identified with $R$. Thus $\cC\subseteq J$ for any ideal $J$ which realizes itself, using  $(a)$ implies $(c)$ which we already established in the above discussion. Next notice that by \Cref{rem3.1}, we get that $\h(M)=\h(J)=\lambda(R/J)$ for any $J$ that realizes $M$. Again using $(a)$ implies $(c)$, we get that $\h(M)\leq \lambda(R/\cC)$. The proof is now complete by \Cref{prop.cor1}.
\end{proof}

Without the condition that $R$ is Gorenstein, the equivalent statements in \Cref{prop.thm1} do not imply that $\cC\subseteq J$. The following example illustrates this. The author would like to thank the reviewer for bringing this to attention.
\begin{example}\label{reviewer}
	Let $R=\frac{\CC[[X,Y,Z]]}{(Y^4-X^2Z,X^4-YZ,X^2Y^3-Z^2)}\cong \CC[[t^5,t^6,t^{14}]]$. Let $\m=(x,y,z)=(X,Y,Z)R$. Macaulay 2 computations show that $\cC=(z,x^3,x^2y,xy^2,y^3)$. Thus, $\cC\not\subseteq \m^2$. However, $\cC\subseteq x^2:_R(x^2:_R\m^2)$ as the latter is the ideal $(x^2,xy,y^2,z)$. 
	
\end{example}

\section{Relationship With The Trace Ideal}\label{sec_traceideal}

Note that if $\ds J$ realizes $M$ as in \Cref{definv}, we can discuss a relationship of $J$ to the trace ideal of $M$. We recall the definition of trace ideal.

\begin{definition}\label{deftrace}
	The trace ideal of an $R$-module $M$, denoted $\text{tr}_R(M)$, is the ideal $\ds \sum \alpha(M)$ as $\alpha$ ranges over $\ds M^{*}:= \Hom_R(M,R)$. The \textit{trace map} is defined as follows. \begin{align*}\text{Tr}_M:~  &M\tens_RM^*\to R\\
	&\text{Tr}_M(m\tens \alpha)=\alpha(m)
	\end{align*}
	Note that the image of the trace map is $\text{tr}_R(M)$. 
\end{definition}	

Notice that any ideal $J$ that realizes $M$ is the image of one such map $\alpha$ as in \Cref{deftrace}, which justifies thinking of $J$ as a partial trace ideal of $M$.

\begin{remark}\label{vasconcelos} An ideal $I$ is called a trace ideal if $I=\text{tr}_R(M)$ for some $R$-module $M$. For a summary of some of the standard properties of trace ideals, the reader can refer to  \cite[Proposition 2.8]{MR3646286} and \cite[Proposition 2.4]{kobayashi2019rings}. We record the following facts which will be useful for us.
	\begin{itemize}
		\item[a)] For any ideal $I$, $I\subseteq \text{tr}_R(I)$. Also, if a module $M$ surjects to $I$, it is evident from the definition that $I\subseteq \text{tr}_R(M)$. Hence, $I\subseteq \text{tr}_R(I)\subseteq \text{tr}_R(M)$. 
		\item[b)]\label{trace_compute_rem1} Suppose $\Phi$ is a presentation matrix for a module $M$ and $\Psi$ is a matrix whose columns generate the kernel of $\Phi^t$. Then $\ds \text{tr}_R(M) =I_1(\Psi)$ where $I_1(\Psi)$ is the ideal generated by the entries of $\Psi$; see \cite[Remark 3.3]{MR1055780}  and \cite[Proposition 3.1]{MR4013970}.
		
		\item[c)] For any regular fractional ideal $I$ (i.e., an $R$-submodule of the ring of quotients of $R$, which contains a non-zero-divisor of $R$), we have $\text{tr}_R(I)=I(R:_KI)=II^{-1}$ (see \cite[Proposition 2.4]{kobayashi2019rings}).
	\end{itemize}
\end{remark}

\begin{example}
	Let $R=k[x,y,z]$. Let $\ds I=(xy,yz)$. A presentation of $I$ is given by 
	$$R\xrightarrow{
		\begin{bmatrix}
		-z\\
		x
		\end{bmatrix}}
	R^2\xrightarrow{
		\begin{bmatrix}
		xy & yz
		\end{bmatrix}}
	I\to 0$$
	
	\noindent Following the notation of \Cref{trace_compute_rem1}(b), $\ds \Phi^t=[-z~~x]$; so $\ds \Psi=[x ~z]^t$. Hence, $\text{tr}_R(I)=(x,z)$. In fact, this is an example where $I\neq \text{tr}_R(I)$ which in turn implies that $I$ is not a trace ideal \cite[Proposition 2.8]{MR3646286}; however, we  have that $\ds I\cong \text{tr}_R(I)$. 
\end{example}

\begin{proposition}\label{trace_prop}
	Let $R$ be a Noetherian one-dimensional local domain and let $M$ be a finitely generated $R$-module with non-zero rank. Then
	$\ds \h(M)\geq \lambda\({R}/{\text{tr}_R(M)}\)\geq \h(\text{tr}_R(M))$. If in addition  $\widehat{R}$ is reduced, then $\ds \h(\text{tr}_R(M))= \lambda(R/\text{tr}_R(M)).$

\end{proposition}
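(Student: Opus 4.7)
The plan is to handle the three claims in turn: the leftmost inequality follows from the definition of the trace ideal, the middle inequality from the definition of $\h$, and the final equality is obtained by verifying hypothesis $(b)$ of \Cref{prop.propo1} for the ideal $\text{tr}_R(M)$.

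For $\h(M)\geq \lambda(R/\text{tr}_R(M))$: any homomorphism $\phi:M\to R$ with image $J=\phi(M)$ satisfies $J\subseteq \text{tr}_R(M)$ by \Cref{vasconcelos}(a), so $\lambda(R/J)\geq \lambda(R/\text{tr}_R(M))$, and taking the minimum over such $\phi$ gives the bound. For $\lambda(R/\text{tr}_R(M))\geq \h(\text{tr}_R(M))$: the identity isomorphism $\text{tr}_R(M)\cong \text{tr}_R(M)$ exhibits $\text{tr}_R(M)$ itself as a competitor in the definition of $\h(\text{tr}_R(M))$.

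For the equality under the hypothesis that $\widehat{R}$ is reduced, set $I=\text{tr}_R(M)$; the plan is to show $R:_K I\subseteq \overline{R}$, after which \Cref{prop.propo1} yields $\h(I)=\lambda(R/I)$. Since $M$ has positive rank over the domain $R$, $\Hom_R(M,R)\otimes_R K\neq 0$, so a nonzero map $\phi:M\to R$ exists; its image is a nonzero ideal, so $I$ is nonzero, contains a non-zero-divisor, and is a regular fractional ideal. The crux of this step is the idempotence $\text{tr}_R(I)=I$: for any $\alpha:I\to R$ and $\beta:M\to R$, the composition $\alpha\circ\beta$ is an $R$-map $M\to R$, so $\alpha(\beta(M))\subseteq \text{tr}_R(M)=I$, yielding $\text{tr}_R(I)\subseteq I$; the reverse inclusion is \Cref{vasconcelos}(a). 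Combining this with \Cref{vasconcelos}(c), $I\cdot I^{-1}=\text{tr}_R(I)=I$, so $I^{-1}\subseteq (I:_K I)$.

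The last step, which I expect to be the main technical point, is to note that $I$ is a finitely generated faithful $R$-module, so the determinant trick (Cayley--Hamilton) forces $(I:_K I)\subseteq \overline{R}$. This gives $I^{-1}\subseteq \overline{R}$, and \Cref{prop.propo1} concludes. Apart from this integrality argument, the proof is essentially bookkeeping with the two definitions plus one invocation of \Cref{prop.propo1}.
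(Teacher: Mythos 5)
Your proof is correct and follows essentially the same line as the paper: the two inequalities come, exactly as you argue, from $\phi(M)\subseteq\text{tr}_R(M)$ (\Cref{vasconcelos}(a)) and from $\text{tr}_R(M)\cong\text{tr}_R(M)$ being a competitor in the minimum, and the equality is obtained by verifying hypothesis $(b)$ of \Cref{prop.propo1} for $I=\text{tr}_R(M)$. The only difference is at the final step: the paper simply cites \cite[Proposition 2.8(vi)]{MR3646286}, which states that $R:_K I=I:_K I\subseteq\overline{R}$ for any trace ideal $I$, whereas you reprove this fact from scratch via the idempotence $\text{tr}_R(\text{tr}_R(M))=\text{tr}_R(M)$, the identity $\text{tr}_R(I)=II^{-1}$ for a regular fractional ideal (\Cref{vasconcelos}(c)), and the determinant trick. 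This is a perfectly valid and self-contained way to recover the cited result; nothing is gained or lost beyond making the argument free of the external reference. One small point you could make explicit: $I:_K I\subseteq R:_K I$ because $I\subseteq R$, so together with your inclusion $I^{-1}\subseteq I:_K I$ you in fact recover the full equality $R:_K I=I:_K I$ that \cite{MR3646286} records, though only the one inclusion into $\overline{R}$ is needed to invoke \Cref{prop.propo1}.
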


\begin{proof}
	Choose an ideal $J$ which realizes $M$. The first part of the proof is now complete by \Cref{vasconcelos}(a) and \Cref{definv}. The last equality follows from \Cref{prop.propo1} since for any trace ideal $I$, $R:_K I=I:_K I\subseteq \overline{R}$ \cite[Proposition 2.8(vi)]{MR3646286}.
\end{proof}

We can in fact completely describe the trace of any such partial trace ideal for any finitely generated module of rank one. 

\begin{proposition}\label{trace_rem1}
	Let $R$ be a Noetherian one-dimensional local domain and let $M$ be a finitely generated $R$-module of rank one. Then for any ideal $J$ such that $M$ surjects to $J$, we have $$\ds \text{tr}_R (J)=\text{tr}_R (M).$$
	
\end{proposition}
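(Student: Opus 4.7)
The plan is to prove the two inclusions separately. The inclusion $\text{tr}_R(J)\subseteq \text{tr}_R(M)$ is essentially immediate: since $M$ surjects onto $J$, Remark \ref{vasconcelos}(a) gives $J\subseteq \text{tr}_R(M)$, and applying the same remark to $J$ gives $\text{tr}_R(J)\subseteq \text{tr}_R(M)$ (since any $R$-linear map out of $J$ extends along the surjection to an $R$-linear map out of $M$). The substance is therefore in the reverse inclusion.

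For the reverse inclusion, fix the surjection $\pi\colon M \twoheadrightarrow J$ (we may assume $J\neq 0$, since otherwise the hypothesis that $M$ has rank one makes the situation degenerate). The key idea is that, because $M$ has rank one, every $R$-homomorphism $M\to R$ is forced to be a scalar multiple of $\pi$ in the fraction field. Concretely, tensoring $\pi$ with $K$ gives a surjection of one-dimensional $K$-vector spaces $M\otimes_R K \twoheadrightarrow J\otimes_R K = K$, hence an isomorphism. Thus for any $\phi\in \Hom_R(M,R)$, the induced $K$-linear map $\phi\otimes K$ differs from $\pi\otimes K$ by a unique scalar $c\in K$, and since $R$ embeds in $K$ (because $R$ is a domain), this descends to the identity $\phi(m)=c\,\pi(m)$ inside $R$ for every $m\in M$ (torsion elements of $M$ map to zero on both sides because $R$ is torsion-free).

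Consequently $\phi(M) = c\,\pi(M) = cJ \subseteq R$, which forces $c\in R:_K J = J^{-1}$, and therefore $\phi(M) = cJ \subseteq J\cdot J^{-1}$. Now $J$ is a regular fractional ideal (it is a nonzero ideal in a domain), so by Remark \ref{vasconcelos}(c) we have $JJ^{-1} = \text{tr}_R(J)$. Summing over all $\phi\in \Hom_R(M,R)$ yields $\text{tr}_R(M)\subseteq \text{tr}_R(J)$, which combined with the first inclusion finishes the proof.

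I do not foresee a serious obstacle: the proof is really just a rank-one linearization after extending scalars to $K$, together with the identification $\text{tr}_R(J)=JJ^{-1}$ that is already recorded in the paper. The only subtlety worth flagging carefully in the write-up is handling the torsion of $M$ when passing from the identity $\phi\otimes K = c(\pi\otimes K)$ back to an identity in $R$; this is resolved using that $R$ is a domain.
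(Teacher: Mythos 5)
Your proof is correct, but it takes a somewhat different route from the paper's. The paper uses the short exact sequence $0\to \tau(M)\to M\xrightarrow{f} J\to 0$ (the kernel of the surjection is exactly the torsion submodule because $J$ is torsion-free and $\Rank_R(J)=1=\Rank_R(M)$) and applies $\Hom_R(-,R)$: since $\Hom_R(\tau(M),R)=0$, the map $f^*\colon J^*\to M^*$ is an isomorphism, so every homomorphism $M\to R$ factors through $J$ and $\text{tr}_R(M)\subseteq \text{tr}_R(J)$ follows at once, with no need for the identity $\text{tr}_R(J)=JJ^{-1}$. You instead linearize over $K$: rank one forces $\phi\otimes K=c\,(\pi\otimes K)$ for a scalar $c\in K$, the containment $cJ=\phi(M)\subseteq R$ puts $c\in J^{-1}$, and then you invoke \Cref{vasconcelos}(c) to land inside $JJ^{-1}=\text{tr}_R(J)$. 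The two arguments hinge on the same phenomenon (rank one makes every map out of $M$ a ``scalar multiple'' of the given surjection), but yours is element-wise and leans on the Kobayashi--Takahashi trace formula, while the paper's dualization is shorter and self-contained; in fact your own computation already shows $\phi=(c\cdot)\circ\pi$ with $c\cdot\colon J\to R$, so the detour through $JJ^{-1}$ could be skipped and you would recover the paper's factorization statement directly. One shared caveat: both arguments tacitly assume $J\neq 0$ (your ``degenerate case'' aside), which is indeed how the paper reads the hypothesis, since it asserts $\Rank_R(J)=1$.
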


\begin{proof} For any ideal $J$ such that $M$ surjects to $J$, we have the following exact sequence:
	\begin{equation}\tag{\ref{trace_rem1}.1}\label{trace_rem1_eq0}
		0\to \tau(M)\to M\xrightarrow{f} J\to 0
	\end{equation} \noindent where $\tau(M)$ is the torsion submodule of $M$. This is because $J$ is torsion-free and $\Rank_R(J)=1=\Rank_R(M)$. 
	Applying $\ds \Hom_R(\cdot,R)$ to \Cref{trace_rem1_eq0}, we have the isomorphism $\ds f^*:J^*\to M^*$. Thus every map $M \to R$
	factors through a map $J\to R$ and hence, $\ds \text{tr}_R(M) \subseteq \text{tr}_R(J)$. The other containment is clear.
	\end{proof}

\section{Application to $\Omega_{R/k}$: Berger's Conjecture}\label{invariantapp}
	
In this section, we are going to focus on $\hun$. Throughout this section, mainly we shall need $\ds (R,\m ,k)$ to be a Noetherian complete local one-dimensional domain such that $\ds R=S/I$ where $S=k[[X_1,\dots,X_n]]$, $k$ is perfect and $n\geq 2$. Let $\n$, $\cC$ and $\omega$ be the maximal ideal of $S$,  the conductor ideal of $R$ and the canonical module of $R$ respectively. Finally, $\ds \Omega_{ R/k}$ denotes the universally finite module of differentials (see \Cref{defOmega}) and $\tau(\Omega_{R/k})$ denotes its torsion submodule.

Note that under these assumptions on $R$, the hypothesis of \Cref{prop.propo1} is satisfied and we can apply the results of the previous sections. We summarize them in the next proposition.  

\begin{proposition}\label{allboundssummary}
Let	$\ds (R,\m ,k)$ be a Noetherian complete local one-dimensional domain such that $\ds R=S/I$ where $S=k[[X_1,\dots,X_n]], \ k$ is perfect  and $\ds I\subseteq (X_1,\dots,X_n)^{2}$.   Let $J$ be an ideal that realizes $\ds \Omega_{ R/k}$. Then the following hold.
\begin{itemize}
	\item[$(a)$] $\ds \cC\subseteq x:_R(x:_R J)$ for some $x\in J$.
	
	\item[$(b)$] $\ds \cC\omega\subseteq J\omega.$
	
	\item[$(c)$] If $R$ is Gorenstein, then $\ds \cC\subseteq J$.

	\item[$(d)$] If $R$ is Gorenstein, then $\ds \hun\leq \h(\cC)=\lambda(R/\cC)$.

	\item[$(e)$] $\ds \h(\Omega_{ R/k})\geq \h(\text{tr}_R(\Omega_{ R/k}))=\lambda(R/\text{tr}_R(\Omega_{ R/k}))=\lambda(R/\text{tr}_R(J))=\h(\text{tr}_R(J))$.

\end{itemize}
\end{proposition}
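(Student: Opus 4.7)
The proposition is a bookkeeping statement: each bullet assembles a consequence of the machinery developed in Sections~\ref{invariant} and~\ref{sec_traceideal} once we know the relevant hypotheses are in force and once we exploit the fact that $\ds \Omega_{R/k}$ has rank one. My plan is therefore to first justify that we are in the situation of \Cref{prop.thm1} and then cash in each previously proven equivalence.

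The starting observation is that since $J$ realizes $\ds \Omega_{R/k}$, \Cref{rem3.1} gives $\h(J)=\h(\Omega_{R/k})=\lambda(R/J)$, so $J$ realizes itself. To invoke \Cref{prop.thm1} on $J$ I must check its hypotheses in the present setting: $R$ is already complete, hence $\widehat{R}=R$ is a domain and in particular reduced, $\omega$ exists since $R$ is a quotient of the regular local ring $S$, and finally $\overline{R}$ is a DVR. The latter is the one non-trivial verification: because $\widehat{R}=R$ is reduced, $\overline{R}$ is module-finite over the Henselian ring $R$, hence decomposes as a product of local rings, and since $\overline{R}$ is a domain this product has a single factor; that factor is a one-dimensional normal local Noetherian domain, i.e. a DVR. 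This is the step that does the most conceptual work.

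With \Cref{prop.thm1} available, parts (a), (b), (c), (d) are immediate. Statement (a) of the proposition is exactly the implication $(a)\Rightarrow(b)$ of \Cref{prop.thm1} applied to $J$, and statement (b) is $(a)\Rightarrow(c)$ of the same theorem. For (c) I specialize (b): when $R$ is Gorenstein, $\omega$ may be identified with $R$ as a submodule of $K$ (via the identification described in \Cref{convention_remark}), so $\cC\omega\subseteq J\omega$ collapses to $\cC\subseteq J$. For (d), (c) gives $\cC\subseteq J$, hence
\begin{equation*}
\h(\Omega_{R/k})=\lambda(R/J)\leq \lambda(R/\cC)=\h(\cC),
\end{equation*}
where the last equality is \Cref{prop.cor1}.

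Finally, (e) is a direct application of the trace-ideal results of \Cref{sec_traceideal}. By \Cref{rankomega}(b), $\ds \Omega_{R/k}$ has rank one, so \Cref{trace_rem1} applied to the surjection $\Omega_{R/k}\twoheadrightarrow J$ yields $\text{tr}_R(J)=\text{tr}_R(\Omega_{R/k})$. Combining with \Cref{trace_prop} (which gives both the inequality $\h(\Omega_{R/k})\geq \lambda(R/\text{tr}_R(\Omega_{R/k}))$ and the equality $\h(\text{tr}_R(\Omega_{R/k}))=\lambda(R/\text{tr}_R(\Omega_{R/k}))$, using again that $\widehat R$ is reduced) delivers the required chain
\begin{equation*}
\h(\Omega_{R/k})\geq \h(\text{tr}_R(\Omega_{R/k}))=\lambda(R/\text{tr}_R(\Omega_{R/k}))=\lambda(R/\text{tr}_R(J))=\h(\text{tr}_R(J)).
\end{equation*}
Thus no new ideas are needed beyond verifying the DVR reduction and then quoting the appropriate results; the only genuine content of the proof is the reduction paragraph above.
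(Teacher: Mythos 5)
Your proposal is correct and follows essentially the same route as the paper, which simply cites \Cref{prop.thm1} for parts $(a)$--$(d)$ and \Cref{trace_prop} together with \Cref{trace_rem1} for part $(e)$. The only difference is that you spell out the verification (left implicit in the paper) that the hypotheses of \Cref{prop.thm1} hold here, in particular that $\overline{R}$ is a DVR because $R$ is a complete (hence Henselian) local domain; that check is accurate and harmless.
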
	

\begin{proof}
	The statements $(a),(b),(c)$ and $(d)$ follow immediately from \Cref{prop.thm1}. The statement $(e)$ follows from 
	\Cref{trace_prop} and \Cref{trace_rem1}.
\end{proof}

We are only going to focus on the non-regular case, i.e., $\mu(\m)\geq 2$. The following remark shows that in order to study $\tau(\Omega_{R/k})$, we may only consider the cases when $\ds \hun\geq 1$.

\begin{remark}\label{h=0}
		Suppose $\ds \hun=0$. Then we have an exact sequence
		$$0\to \tau(\Omega_{R/k}) \to \Omega_{ R/k}\to R\to 0,$$ thus $R$ splits off $\ds \Omega_{R/k}$.  If $R$ is non-regular, then $\tau(\Omega_{R/k})\neq 0$ as otherwise, $\m$ should be principally generated, a contradiction. In fact, whenever $R$ is complete and $\chr(k)=0$, any such surjection from $\Omega_{ R/k}$ to $R$ implies that $R$ is regular. This can be found, for instance, in the Nordic Summer School Lecture notes of B. Teissier \cite[discussion on pages 586-587]{teissier1976hunting}. This is essentially a generalization of the proof with $\ds k=\mathbb{C}$ which can be found in the work of H. Hironaka \cite[Lemma 5]{hironaka1983nash}. Both authors attribute the proofs to O. Zariski.

\end{remark}

We make the following remark which will be useful in the discussions that follow.
\begin{remark}\label{maximalidealgen}
	Suppose $\ds (S,\n,k)$ is a regular local ring of embedding dimension $n$ and $\ds R=S/I$ for some ideal $I$ in $S$ where $\ds I\subseteq \n^{s+1}$ for some $s\geq 1$. Letting $\m$ denote the maximal ideal of $R$, we have $\mu(\n^i)=\mu(\m^i)={n+i-1\choose i}$ for $1\leq i\leq s$ due to the condition imposed on $I$.
\end{remark}

We start by proving three results which will be crucial to the proof of the main theorem.

	\begin{lemma}\label{lem2}
	Let	$\ds (R,\m ,k)$ be a Noetherian complete local one-dimensional domain such that $\ds R=S/I$ where $S=k[[X_1,\dots,X_n]]$, $k$ is perfect and $n\geq 2$. Let $\n$ be the maximal ideal of $S$. Suppose $J$ realizes $\ds \Omega_{R/k}$. If $\ds I\subseteq \n ^{s+1}$ for some $s\geq 1$ and $\tau(\Omega_{R/k})=0$, then $\ds J\subseteq \m^s$.
	\end{lemma}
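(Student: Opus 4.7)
The plan is to exploit the fact that, under the hypothesis $\tau(\Omega_{R/k})=0$, every surjection from $\Omega_{R/k}$ onto an ideal is forced to be an isomorphism, and then to pull back the Jacobian presentation to a syzygy constraint on the generators of $J$.

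First I would observe that since $\Omega_{R/k}$ has rank one (Remark \ref{rankomega}(b)) and $R$ is a one-dimensional domain, any ideal $J$ realizing $\Omega_{R/k}$ must also have rank one, so the surjection $\Omega_{R/k}\twoheadrightarrow J$ has a kernel of rank zero, hence a torsion kernel. The assumption $\tau(\Omega_{R/k})=0$ then forces the surjection to be an isomorphism, giving $J\cong \Omega_{R/k}$.

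Next, I would transport the presentation $R^{\mu(I)}\xrightarrow{A} R^n\to \Omega_{R/k}\to 0$ from Definition \ref{defOmega} over to $J$. Let $a_i\in J$ be the image of $dX_i$ under the composition $R^n\to \Omega_{R/k}\xrightarrow{\cong} J\hookrightarrow R$. Then $J=(a_1,\dots,a_n)$ and the map $R^n\to R$ sending $e_i\mapsto a_i$ has kernel exactly $\Syz_R(a_1,\dots,a_n)$, which must coincide with $\mathrm{image}(A)$, since both are the kernel of $R^n\twoheadrightarrow J$ (via the chain $R^n\to\Omega_{R/k}\cong J$). Now the hypothesis $I\subseteq \n^{s+1}$ implies that every entry $\partial f_j/\partial X_i$ of $A$ lies in $\n^s$, hence in $\m^s$ after passing to $R$. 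Consequently, $\mathrm{image}(A)\subseteq (\m^s)^n$, so $\Syz_R(a_1,\dots,a_n)\subseteq (\m^s)^n$.

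Finally, I would apply this syzygy constraint to the Koszul syzygies. For any pair $i\ne j$, the vector $a_i e_j-a_j e_i\in R^n$ satisfies $a_i a_j-a_j a_i=0$ and so is an element of $\Syz_R(a_1,\dots,a_n)$, hence lies in $(\m^s)^n$. Its nonzero coordinates are exactly $a_i$ and $-a_j$, which must therefore both belong to $\m^s$. Since $n\geq 2$, we may do this for every index, so $a_k\in \m^s$ for all $k$, yielding $J\subseteq \m^s$. The only delicate point is the identification $\mathrm{image}(A)=\Syz_R(a_1,\dots,a_n)$, which is the reason we needed $\tau(\Omega_{R/k})=0$ to upgrade the realization to an isomorphism; everything else is a direct consequence of the Jacobian presentation and the order of vanishing of the defining ideal.
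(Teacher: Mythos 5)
Your proof is correct and is essentially the paper's own argument: both use rank one plus $\tau(\Omega_{R/k})=0$ to upgrade the realization to an isomorphism $J\cong\Omega_{R/k}$, transport the Jacobian presentation so that all syzygies of the generators of $J$ come from the matrix $A$ with entries in $\m^s$, and then read off from the Koszul syzygies that the generators lie in $\m^s$. Your version is just slightly more explicit about identifying the generators of $J$ with the images of the $dX_i$ and about running the Koszul argument over all indices rather than by contradiction on a single generator.
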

	
	\begin{proof}
	Since $J$ is torsion-free and since $\Rank_R(J)=1=\Rank_R(\Omega_{ R/k})$,	we have the following exact sequence:
		\begin{equation}\tag{\ref{lem2}.1}\label{trace_rem1_eq1}
			0\to \tau(\Omega_{R/k})\to \Omega_{ R/k}\to J\to 0.
		\end{equation}    
 Since $\tau(\Omega_{ R/k})=0$, $\ds \Omega_{R/k}\cong J$. Using notations as in \Cref{defOmega},  any relation between the generators of $J$ must come from the matrix $A$ presenting $\Omega_{ R/k}$. Suppose if possible, that there exists $y_1,y_2,\dots,y_n$ generators of $J$ such that $y_1\not \in \m^s$. There exists the Koszul relation, $\ds -y_2y_1+y_1y_2=0$. Hence, the column vector $\ds [-y_2\quad y_1\quad 0 \quad\dots\quad 0]^t$ of $R^n$ must be written as an $R$-linear combination of the columns of $A$. Since $I\subseteq \n^{s+1}$, the entries of $A$ are in $\n^s$ but this is a contradiction to the choice of $y_1$.
	\end{proof}

	The next proposition is a generalization of \cite[Theorem 2.3.2(c)]{bruns_herzog_1998}. Before stating it, we recall that the \textit{associated graded ring} corresponding to the maximal ideal $\n$ in a local ring $(S,\n,k)$ is defined to be \[\operatorname{gr}_\n(S):=\oplus_{i\geq 0}\frac{\n^i}{\n^{i+1}}.\] For any element $s\in S$, we choose the greatest integer $\ell$ such that $s\in \n^\ell$, and define the leading term of $s$ to be $s^*:=s~\text{modulo~}\n^{\ell+1}\in \frac{\n^\ell}{\n^{\ell+1}}
	$. Hence, $s^*\in \operatorname{gr}_\n(S)$. If $S$ is a regular local ring with embedding dimension $n$, then $\operatorname{gr}_\n(S)$ is isomorphic to a polynomial ring over $k$ with $n$ variables. This follows from \cite[Theorem 1.1.8]{bruns_herzog_1998}. For further details, see \cite[5.1]{eisenbud2013commutative}.

\begin{proposition}\label{lem1}
	Let $\ds (S,\n,k)$ be a regular local ring of embedding dimension $n$ and let $\ds R=S/I$ for an ideal $I$ in $S$. Let $\m$ be the maximal ideal of $R$. Further assume that $\ds I\subseteq \n^{s+1}$ for some $s\geq 1$. Then $$\ds \text{dim}_k\(\Tor^R_1(\m^s,k)\)=s{n+s-1\choose s+1}+\mu(I).$$
\end{proposition}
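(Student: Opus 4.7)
The plan is to compute $\Tor^R_1(\m^s, k)$ by exhibiting $\m^s$ as the cokernel of a natural inclusion of two better-understood $R$-modules and reading off the resulting long exact sequence in $\Tor$.

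Since $I\subseteq \n^{s+1}\subseteq \n^s$, I identify $\m^s = \n^s R \cong \n^s/I$ as $R$-modules, and likewise $\m^s/\m^{s+1}\cong \n^s/\n^{s+1}$. This provides the short exact sequence of $R$-modules
\[
0\to I/I\n^s \to \n^s/I\n^s \to \m^s \to 0,
\]
whose middle term is $\n^s\otimes_S R$ and whose kernel is the image of $I$ therein. Applying $-\otimes_R k$ and evaluating the tail, I would use the following computations. Since $I\subseteq \n^s$ implies $I^2\subseteq I\n^s\subseteq \n I$, the $R$-action on $I/I\n^s$ factors through $S/\n$, so $(I/I\n^s)\otimes_R k = I/\n I$ of dimension $\mu(I)$. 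The next two terms are both $\n^s/\n^{s+1}$ (the former equals $\n^s\otimes_S k$, the latter equals $\m^s/\m^{s+1}$), and the map between them is the identity. Finally, the map $I/\n I \to \n^s/\n^{s+1}$ vanishes because $I\subseteq \n^{s+1}$. Thus the LES provides a surjection $\Tor^R_1(\m^s,k)\twoheadrightarrow I/\n I$, accounting for the $\mu(I)$ summand.

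To finish, I need to promote the displayed piece of the LES to a short exact sequence
\[
0\to \Tor^R_1(\n^s/I\n^s, k)\to \Tor^R_1(\m^s, k) \to I/\n I \to 0
\]
and to identify $\Tor^R_1(\n^s/I\n^s, k)\cong k^{\beta_1^S(\n^s)}$, where $\beta_1^S(\n^s)=s\binom{n+s-1}{s+1}$ is the classical first Betti number of $\n^s$ over $S$ coming from the Eagon–Northcott-type minimal resolution of powers of the maximal ideal of a regular local ring. The first statement is equivalent to showing that the map $\Tor^R_1(\n^s/I\n^s, k)\to \Tor^R_1(\m^s, k)$ is injective (equivalently, that the connecting map $\Tor^R_2(\m^s, k)\to \Tor^R_1(I/I\n^s, k)$ is surjective). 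The second amounts to showing that the reduction modulo $I$ of a minimal $S$-presentation of $\n^s$ remains a minimal $R$-presentation of $\n^s/I\n^s$; since the original matrix has entries in $\n$ (hence the reduced one has entries in $\m$), what truly needs checking is that no first syzygies collapse upon reduction.

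The main obstacle is carrying out these two minimality/injectivity arguments precisely. A uniform path to both is the 5-term Cartan–Eilenberg change-of-rings sequence applied to $M=S/\n^s=R/\m^s$, using the identification $\Tor^R_1(\m^s,k)=\Tor^R_2(S/\n^s,k)$; there, since the last map $\Tor^S_1(S/\n^s,k)\to \Tor^R_1(S/\n^s,k)$ is an isomorphism between $k^{\binom{n+s-1}{s}}$'s and forces the preceding map to vanish, the entire analysis reduces to showing injectivity of the edge map $\Tor^S_2(S/\n^s,k)\to \Tor^R_2(S/\n^s,k)$. Via the full spectral sequence $E_2^{p,q}=\Tor^R_p(S/\n^s,\Tor^S_q(R,k))$, this in turn is controlled by the vanishing of the $E_\infty^{0,2}$ and $E_\infty^{1,1}$ contributions at total degree two. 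Once these technical vanishings are established, a direct dimension count on the resulting short exact sequence produces $\dim_k\Tor^R_1(\m^s,k) = s\binom{n+s-1}{s+1}+\mu(I)$, as claimed.
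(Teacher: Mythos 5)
Your reduction of the desired count to two spectral--sequence vanishings is correct, but it leaves the actual content of the proposition unproved. To be precise: in the change-of-rings spectral sequence $E^2_{p,q}=\Tor^R_p\bigl(R/\m^s,\Tor^S_q(R,k)\bigr)\Rightarrow\Tor^S_{p+q}(R/\m^s,k)$, the five-term sequence does give
$\dim_k\Tor^R_2(R/\m^s,k)=\dim_k E^\infty_{2,0}+\mu(I)$
(using that $\Tor^S_1(S/\n^s,k)\to\Tor^R_1(S/\n^s,k)$ is a map of $k^{\binom{n+s-1}{s}}$'s which the sequence forces to be surjective, hence an isomorphism), and $\dim_k E^\infty_{2,0}=q$ is indeed equivalent to $E^\infty_{0,2}=E^\infty_{1,1}=0$. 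Likewise your tail analysis of the long exact sequence attached to $0\to I/I\n^s\to\n^s/I\n^s\to\m^s\to 0$ is correct, and the vanishing of $I/\n I\to\n^s/\n^{s+1}$ (because $I\subseteq\n^{s+1}$) is exactly how the $\mu(I)$ summand appears. But the two vanishings you need are not routine and are not established. Indeed $E^2_{0,2}=\Tor^S_2(R,k)$ and $E^2_{1,1}\cong\Tor^R_1(R/\m^s,k)^{\mu(I)}\cong k^{\binom{n+s-1}{s}\mu(I)}$ are both nonzero as soon as $n\geq 3$, so one genuinely has to show they are killed by higher differentials; and the sources of those differentials, namely $E^2_{2,1}$, $E^2_{3,0}$, $E^3_{3,0}$, are built out of $\Tor^R_{\geq 2}(R/\m^s,k)$ — precisely the object you are trying to compute. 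As written the argument is circular (or at best deferred), and the same criticism applies to the first framing: both the injectivity of $\Tor^R_1(\n^s/I\n^s,k)\to\Tor^R_1(\m^s,k)$ and the computation $\dim_k\Tor^R_1(\n^s/I\n^s,k)=s\binom{n+s-1}{s+1}$ are simply asserted.

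The paper's proof is genuinely different and supplies exactly the piece you are missing. It never passes through $\n^s/I\n^s$ or a change-of-rings spectral sequence. Instead it tensors the Eagon--Northcott resolution $\cdots\to P_2\xrightarrow{d_2}P_1\xrightarrow{d_1}S\to S/\n^s\to 0$ with $R$ to obtain $0\to Z_1\to R^p\to R\to R/\m^s\to 0$, identifies $\dim_k\Tor^R_1(\m^s,k)=\mu(Z_1)$, and then explicitly produces a generating set of $Z_1$ of size $q+\mu(I)$: the reductions $\overline{d_2(w_i)}$ of the standard basis of $P_2$, together with the columns $\overline{v_j}$ of a matrix $B$ with entries in $\n$ expressing a minimal generating set of $I$ in terms of one for $\n^s$. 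The nontrivial step — and the analogue of your missing vanishings — is the proof that this generating set is \emph{minimal}; the paper does this by passing to leading forms in the associated graded ring $\operatorname{gr}_\n(S)$ and using exactness of the Eagon--Northcott complex built from the leading forms $y_i^*$ over that polynomial ring. Unless you supply an argument of comparable substance (for instance, proving that a chain map lifting $S/I\to S/\n^s$ over $S$ can be chosen with entries in $\n$ through homological degree two to force $E^\infty_{0,2}=0$, and handling $E^\infty_{1,1}$ separately), the proposal remains a plan rather than a proof.
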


\begin{proof} Choose $\ds x_1,\dots,x_n$ to be a minimal system of generators of $\ds \m$. Let $\ds y_1,y_2,\dots,y_n$ be a regular system of parameters in $S$ such that $\ds \overline{y_i}=x_i$, where $\ds ~~ \bar{\cdot}~~$ denotes going modulo $I$. 
	
	Using the short exact sequence $0\rightarrow \m^s\rightarrow R\rightarrow R/\m^s\rightarrow 0$, we get $\Tor^R_1(\m^s,k)\cong\Tor^R_2(R/\m^s,k)$. 
	In order to get the desired conclusion, we shall make use of the minimal resolutions of $R/\m^s$ and $S/\n^s$. Recall that by \Cref{maximalidealgen}, we have $\mu(\n^s)=\mu(\m^s)={n+s-1\choose s}$.

	Let $F$ denote the free module $S^{n+s-1},G$ denote the free module $S^s$ and $\ds L:F\to G$ be given by the $s\times (n+s-1)$ matrix
	\begin{equation*}
	L=\begin{bmatrix}
	y_1  & y_2 &...& y_n & 0 &... & 0\\
	0 & y_1 & y_2 & ... & y_n & ... &0\\
	\vdots & \vdots &\vdots &\vdots &\vdots &\vdots &\vdots\\
	0 & ... & 0 & y_1 & y_2 & ... &y_n
	\end{bmatrix}.
	\end{equation*}
	By the proof following Remark 2.13 in \cite{bruns2006determinantal}, the ideal of $s\times s$ minors of $L$ is $\n^s$. Since $S$ is a regular local ring and $\operatorname{height}(\n^s)=n=\Rank_S(F)-\Rank_S(G)+1$, we get that the Eagon-Northcott complex 
	$$\ds 
	{\mathbb{EN}}(L)_\bullet: \cdots \to  G^*\tens_S\wedge^{s+1}F \xrightarrow{d_2} \wedge^sF \xrightarrow{d_1} \wedge^sG\cong S \to 0
	$$
	resolves $S/\n^s$ \cite[Theorem A2.60]{MR2103875} (for further details on the $\operatorname{height}$ of an ideal, we refer the reader to \cite[Appendix]{bruns_herzog_1998}). Fix $p={n+s-1\choose s}=\Rank_S(\wedge^sF)$ and $q=s{n+s-1\choose s+1}=\Rank_S(G^*\tens\wedge^{s+1}F)$. Using the identifications $G^*\tens_S\wedge^{s+1}F\cong S^q, \wedge^sF\cong S^{p}$ and $\wedge^sG\cong S$, we can rewrite the above complex as 
	$$\ds 
	{\mathbb{EN}}(L)_\bullet: \cdots \to  P_2 \xrightarrow{d_2} P_1 \xrightarrow{d_1} P_0 \to 0
	$$
	where $P_2=S^q,P_1=S^p,P_0=S$. Moreover, with respect to the standard bases of $P_1$ and $P_0$, the map $d_1$ is given by the matrix $[Y_1 \cdots Y_p]$ where $Y_i\in \n$ are the $s\times s$ minors of $L$. Notice that these minimally generate $\n^s$. We are going to produce elements $\{w_j\}_{1\leq j\leq q}$ of $P_2$ and $\{v_i\}_{1\leq i\leq \mu(I)}$ of $P_1$  which will help in establishing the required dimension.

	Let $\mu(I)=m$ and $I$ be minimally generated by $c_1,\dots,c_m$. 
	Since $ I\subseteq \n^{s+1}\subseteq \n^s$, we can write $ c_i=\sum\limits_{k=1}^{p}b_{ki}Y_k$ with $b_{ki}\in \n$. We write this in row format and set $C=[c_1\cdots c_m]$.  We get that \begin{equation}\tag{\ref{lem1}.1}\label{keyeq}
	C=\begin{bmatrix}
	Y_1
	\cdots
	Y_p
	\end{bmatrix}B=d_1B\end{equation} where $\ds B:=(b_{ki})_{1\leq k\leq p\atop1\leq i\leq m}$ is a $p\times m$ matrix with entries in $\n$. Let $\{e_i\}$ denote the standard basis of $S^m$. We have $Ce_i=c_i$ and thus using  \Cref{keyeq}, we get the commutative diagram
	\[
	\begin{tikzcd}[
	ar symbol/.style = {draw=none,"\textstyle#1" description,sloped}, isomorphic/.style = {ar symbol={\simeq}}]
	P_1\arrow[r,"d_1"] & P_0\\
	S^m\arrow[u,"B"]\arrow[r,"C",swap]& S\arrow[u,isomorphic]
	\end{tikzcd}
	\]
	Let $v_i=Be_i$. Note that these are the columns of $B$. By commutativity of the diagram,  we also have that $c_j=Ce_j=d_1(Be_j)=d_1(v_j)$.
	
	Tensoring the Eagon-Northcott complex with $R$, we obtain the exact sequence
	$$ 0\to Z_1 \to R^{p}\xrightarrow{\overline{d_1}} R\to R/\m^s\to 0$$ where $\overline{d_1}=d_1\otimes_S R=[\overline{Y_1}\cdots \overline{Y_p}]$ and $Z_1=\ker \overline{d_1}$. Notice that the second and third terms in the above exact sequence are the starting terms of the minimal resolution of $R/\m^s$. Moreover, $\overline{d_1}$ is the corresponding presentation matrix. Thus, we get that $\dim_k\Tor^R_1(\m^s,k)=\dim_{k}\Tor^R_2(R/\m^s,k)= \mu(Z_1)$.
	
	Let $b\in P_1$ be such that $\overline{b}\in Z_1$. Then $d_1(b)\in I$ and hence, $d_1(b)=\sum_{i=1}^ms_ic_i$, $s_i\in S$. Since $d_1(v_i)=c_i$, we get \[d_1(b)=\sum_{i=1}^ms_ic_i=\sum_{i=1}^ms_id_1(v_i)=d_1(\sum_{i=1}^ms_iv_i)\] and thus $b-\sum_{i=1}^ms_iv_i\in \ker(d_1)=\text{Im}(d_2)$, where $P_2 \xrightarrow{d_2} P_1$ is the second differential of ${\mathbb{EN}}(L)_\bullet$.  Denoting the standard basis of $\ds P_2$ by $\{w_i\},1\leq i\leq q $, we can write $b-\sum_{i=1}^ms_iv_i= d_2(\sum_{i=1}^q s_i'w_i)$, $s_i'\in S$. So $$\overline{b}=\sum_{i=1}^q\overline{s_i'}\overline{d_2(w_i)} + \sum_{i=1}^m\overline{s_i}\overline{v_i}.$$ Since $\ker (d_1)=\text{Im}(d_2)$, clearly we have $\overline{d_2(w_i)} \in Z_1$ for $1\leq i\leq q$. Since  $c_j=d_1(v_j)$, $\overline{d_1}(\overline{v_j})=0$ and hence $\overline{v_j}$ is in $Z_1$ for $1\leq j\leq m$. Thus we obtain that $$\mu(Z_1)\leq q+m=s{n+s-1\choose s+1}+\mu(I).$$
	
	We now show that $\{\overline{d_2(w_i)},\overline{v_j}\}$ for $1\leq i\leq q,1\leq j\leq m$ form a minimal generating set of $Z_1$. Suppose $ \sum\limits_{i=1}^{q}\overline{\alpha_id_2(w_i)}+\sum\limits_{j=1}^m\overline{\beta_jv_j}=0$ where ${\alpha_i},{\beta_j}\in S$. We are done if we show that all the $\overline{\alpha_i}$ and $\overline{\beta_j}$ are in $\m$, i.e., $\alpha_i,\beta_j$ are in $\n$. The relation under consideration implies that \begin{equation}\tag{\ref{lem1}.2}\label{eq0}\ds \sum\limits_{i=1}^{q}{\alpha_id_2(w_i)}+\sum\limits_{j=1}^m{\beta_jv_j}\in IP_1.
	\end{equation} Applying $d_1$ and again using $d_1(v_i)=c_i$, we get
	\begin{align*}
	\sum\limits_{i=1}^{q}{\alpha_id_1(d_2(w_i))}+\sum\limits_{j=1}^m{\beta_jd_1(v_j)}=\sum_{j=1}^m\beta_jc_j\in Id_1(P_1).
	\end{align*}  
	
	\noindent As $d_1(P_1)\subseteq \n^s$, we obtain that
	$ \sum_{j=1}^{m}\beta_jc_j\in I\n^s\subseteq \n I$. Since $c_j$'s minimally generate $I$, this implies that $\beta_j\in \n$. Since the entries of $B$ are in $\n$,   $v_j=Be_j\in \n P_1$. Therefore $ \sum_{j=1}^m\beta_jv_j\in \n^2P_1$. Now from (\ref{eq0}), we obtain that \begin{align}\tag{\ref{lem1}.3}\label{oops} \sum_{i=1}^{q}{\alpha_id_2(w_i)}\in IP_1+\n^2P_1= (I+\n^{2})P_1\subseteq \n^{2}P_1\end{align} since $I\subseteq \n^{s+1}$ for $s\geq 1$.

	We are trying to show that $\alpha_i\in \n$. We may assume without loss of generality that $\alpha_{1},\dots,\alpha_{\ell}\not\in \n$ and  the remaining $\alpha_j$'s are in $\n$. Recall that the entries of $d_2$ are linear forms in the entries of $L$ (see discussions in \cite[A2.6.1]{eisenbud2013commutative} and \cite[Example A2.69]{MR2103875}). From (\ref{oops}), we conclude that 
	\begin{equation}\tag{\ref{lem1}.4}\label{specialized}\sum_{i=1}^\ell\alpha_{i}d_2(w_{i}) \in \n^2P_1.\end{equation} 
	
	Now observe that $\operatorname{gr}_\n(S)$ is isomorphic to a polynomial ring in $n$ variables over $k$. Consider the matrix \begin{equation*}
	L'=\begin{bmatrix}
	y_1^*  & y_2^* &...& y_n^* & 0 &... & 0\\
	0 & y_1^* & y_2^* & ... & y_n^* & ... &0\\
	\vdots & \vdots &\vdots &\vdots &\vdots &\vdots &\vdots\\
	0 & ... & 0 & y_1^* & y_2^* & ... &y_n^*
	\end{bmatrix}
	\end{equation*} where the $y_i^*$ are the leading forms of $y_i$ (recall that the $y_i$'s minimally generate $\n$). Since the $y_i^*$ form a regular sequence and generate the homogeneous maximal ideal  of $\operatorname{gr}_\n(S)$, we get that the Eagon-Northcott complex over $\operatorname{gr}_\n(S)$ corresponding to $L'$, namely
	$$\ds 
	{\mathbb{EN}}(L')_\bullet: \cdots \to   P_2' \xrightarrow{d_2'}  P_1' \xrightarrow{d_1'}  P_0' \to 0,
	$$
	is also exact (here the modules $P_i'\cong \operatorname{gr}_\n P_i$ are the corresponding free modules over $\operatorname{gr}_\n(S)$).  
	Let $\{w_i'\}$  denote the canonical basis of $P_2'$.
	
	Again, for $i\geq 2$, the entries of $d_i'$ are linear forms in the entries of $L'$. Thus upon comparing degrees, from (\ref{specialized}) we get that $\sum_{i=1}^\ell\alpha_{i}^*d_2'(w_{i}')=0$, where the $\alpha_{i}^*\in k\setminus\{0\}$ are the leading forms of $\alpha_{i}$. In other words, $$\sum_{i=1}^\ell\alpha_{i}^*w_{i}'\in \ker(d'_2)=\operatorname{Im}(d_3')\subseteq \mathfrak{N} P_2'$$ where $\mathfrak{N}$ denotes the homogeneous maximal ideal of $\operatorname{gr}_\n(S)$.  This contradicts the fact that the $\{w_i'\}$ are a basis of $P_2'$.
	
	Thus, $\alpha_{i}^*=0$ for all $1\leq i\leq \ell$. This in turn implies that $\alpha_{i}\in \n$ for all $i$, thereby finishing the proof of \Cref{lem1}.
\end{proof}

\begin{lemma}\label{lengthlemma}
	Let $(R,\m,k)$ be a Noetherian local ring and $M$ be an $R$-module of finite length. Then $$\ds \text{dim}_k\Tor^R_1 (M, k) \leq \lambda(M )(\dim_k\Tor^R_1(k,k)-1)+\mu(M)\leq \lambda(M)\dim_k\Tor^R_1(k,k).$$  
\end{lemma}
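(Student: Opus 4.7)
The plan is to prove the second inequality immediately from $\mu(M)\leq \lambda(M)$, and to prove the first inequality by induction on $\lambda(M)$ using the short exact sequence $0\to \m M\to M\to M/\m M\to 0$. The second inequality is immediate: since $M/\m M$ is a quotient of $M$, we have $\mu(M)=\dim_k(M/\m M)\leq \lambda(M)$, so $\lambda(M)(\dim_k\Tor_1^R(k,k)-1)+\mu(M)\leq \lambda(M)\dim_k\Tor_1^R(k,k)$.

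For the first inequality the base case is $\m M=0$, where $M\cong k^{\mu(M)}$ with $\mu(M)=\lambda(M)$; additivity of $\Tor$ on direct sums yields $\dim_k\Tor_1^R(M,k)=\mu(M)\dim_k\Tor_1^R(k,k)$, which matches the claimed bound with equality. For the inductive step I would assume $\m M\neq 0$, so that $\lambda(\m M)=\lambda(M)-\mu(M)<\lambda(M)$ and the hypothesis applies to $\m M$. The crucial observation is that the natural map $\m M\otimes_R k\to M\otimes_R k$, identified with $\m M/\m^2 M\to M/\m M$, is identically zero because every element of $\m M$ is already $0$ modulo $\m M$. Consequently, in the long exact sequence in $\Tor^R(-,k)$ attached to the short exact sequence above, the connecting map $\Tor_1^R(M/\m M,k)\to \m M\otimes_R k$ is surjective, yielding
$$\dim_k\Tor_1^R(M,k)\leq \dim_k\Tor_1^R(\m M,k)+\dim_k\Tor_1^R(M/\m M,k)-\mu(\m M).$$

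Since $M/\m M\cong k^{\mu(M)}$, the middle term is $\mu(M)\dim_k\Tor_1^R(k,k)$; the inductive hypothesis applied to $\m M$ bounds the first term by $(\lambda(M)-\mu(M))(\dim_k\Tor_1^R(k,k)-1)+\mu(\m M)$. The $\mu(\m M)$ contributions then cancel, and a direct expansion collapses the right-hand side to $\lambda(M)(\dim_k\Tor_1^R(k,k)-1)+\mu(M)$, as desired. The main subtle point anticipated is the vanishing of $\m M\otimes_R k\to M\otimes_R k$: this is exactly what produces the $-\mu(\m M)$ improvement over the naive estimate $\dim_k\Tor_1^R(M,k)\leq \dim_k\Tor_1^R(\m M,k)+\dim_k\Tor_1^R(M/\m M,k)$ and drives the induction; once it is in place the remaining work is routine bookkeeping.
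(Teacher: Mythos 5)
Your proof is correct, but it follows a genuinely different route from the paper's. The paper inducts along a composition series $0=M_0\subsetneq M_1\subsetneq\dots\subsetneq M_\lambda=M$, peeling off a single copy of $k$ at each stage and extracting the correction term $\mu(M)-\mu(M_{\lambda-1})-1$ purely from a rank count in the tail of the long exact sequence for $0\to M_{\lambda-1}\to M\to k\to 0$; iterating telescopes the $\mu$'s and gives the stated bound. You instead induct on $\lambda(M)$ using the $\m$-adic sequence $0\to \m M\to M\to M/\m M\to 0$, handling all $\mu(M)$ simple top factors at once, and your correction term $-\mu(\m M)$ comes from the structural observation that $\m M\otimes_R k\to M\otimes_R k$ is the zero map, forcing the connecting homomorphism $\Tor^R_1(M/\m M,k)\to \m M\otimes_R k$ to be surjective. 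Your bookkeeping checks out: with $t=\dim_k\Tor^R_1(k,k)$, the inductive bound $(\lambda(M)-\mu(M))(t-1)+\mu(\m M)+\mu(M)t-\mu(\m M)$ collapses to $\lambda(M)(t-1)+\mu(M)$, and the base case $\m M=0$ holds with equality. The two arguments buy essentially the same thing in the end; yours makes the source of the $-1$ per length unit more transparent (the vanishing of the induced map on minimal generators), while the paper's composition-series version needs no special observation and proceeds uniformly one simple factor at a time.
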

\begin{proof} First note that the last inequality follows as $\mu(M)\leq \lambda(M)$. Set $\lambda = \lambda(M)$ and start with a composition series  $0=M_0\subsetneq M_1\subsetneq \dots \subsetneq M_{\lambda-1}\subsetneq M_{\lambda}=M$. Tensor $k$ with the short exact sequence $0\to M_{\lambda-1}\to M\to M_{\lambda}/M_{\lambda-1}\cong k \to 0$ to get a long exact sequence of $\Tor$s which gives us $$\ds \dim_{k}\Tor^R_1(M,k)\leq \dim_{k}\Tor^R_1(M_{\lambda-1},k)+\dim_k\Tor^R_1(k,k)+\mu(M)-\mu(M_{\lambda-1)})-1.$$ 
We	keep repeating this procedure for $M_{\lambda-1},\dots, M_{1}$ successively to obtain that $$\dim_k\Tor^R_1(M,k)\leq \lambda\dim_k\Tor^R_1(k,k)+\mu(M)-\lambda. \qedhere$$
\end{proof}

The following theorem is one of the main results of this paper. It identifies an upper bound of $\hun$ which will give some cases of Berger's conjecture. 
	
	\begin{theorem}\label{mainthm}
		Let	$\ds (R,\m ,k)$ be a Noetherian complete local one-dimensional domain such that $\ds R=S/I$ where $S=k[[X_1,\dots,X_n]]$, $k$ is perfect and $n\geq 2$. Let $\n$ be the maximal ideal of $S$ and assume that $\ds I\subseteq \n^{s+1}$ for $s\geq 1$. Let $J$ be any ideal that realizes $\Omega_{ R/k}$. If  $$\ds \h(\Omega_{R/k})<
		 {{n+s\choose s}\frac{s}{s+1}},$$ then
		$\ds \tau(\Omega_{R/k})\neq 0$.
	\end{theorem}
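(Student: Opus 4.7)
The plan is to argue by contradiction: suppose $\tau(\Omega_{R/k})=0$. Since $\Omega_{R/k}$ has rank one by \Cref{rankomega}(b), torsion-freeness forces $\Omega_{R/k}\cong J$ for any $J$ realizing $\Omega_{R/k}$, and then \Cref{lem2} places $J\subseteq\m^{s}$.

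I would then feed the short exact sequence
$$0\to J\to \m^{s}\to \m^{s}/J\to 0$$
into the long exact sequence of $\Tor^{R}_\bullet(-,k)$, which yields
$$\dim_{k}\Tor_{1}^{R}(\m^{s},k)\leq \dim_{k}\Tor_{1}^{R}(J,k)+\dim_{k}\Tor_{1}^{R}(\m^{s}/J,k).$$
By \Cref{rankomega}(c) and $J\cong\Omega_{R/k}$, we have $\dim_{k}\Tor_{1}^{R}(J,k)\leq \mu(I)$. \Cref{lem1} computes the left side exactly as $s\binom{n+s-1}{s+1}+\mu(I)$. Rearranging gives the lower bound
$$\dim_{k}\Tor_{1}^{R}(\m^{s}/J,k)\geq s\binom{n+s-1}{s+1}.$$

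Next I would convert this $\Tor$ estimate into a length estimate via \Cref{lengthlemma} applied to $M=\m^{s}/J$. Under the standing hypothesis $I\subseteq\n^{2}$ we have $\dim_{k}\Tor_{1}^{R}(k,k)=\mu(\m)=n$, so the lemma gives $\dim_{k}\Tor_{1}^{R}(\m^{s}/J,k)\leq n\cdot\lambda(\m^{s}/J)$. Additivity of length together with \Cref{maximalidealgen} (which yields $\lambda(R/\m^{s})=\binom{n+s-1}{n}$) identifies
$$\lambda(\m^{s}/J)=\lambda(R/J)-\lambda(R/\m^{s})=\h(\Omega_{R/k})-\binom{n+s-1}{n}.$$
Stringing the inequalities together delivers
$$\h(\Omega_{R/k})\geq \binom{n+s-1}{n}+\frac{s}{n}\binom{n+s-1}{s+1}.$$

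The last step is a short binomial identity: clearing to the common denominator $n!(s+1)!$ and using $s(s+1)+s(n-1)=s(n+s)$, the right-hand side collapses to $\frac{s\,(n+s)!}{n!(s+1)!}=\binom{n+s}{s}\frac{s}{s+1}$, contradicting the hypothesis. The substantive input is \Cref{lem1}, already in hand; the main obstacle in executing this plan is simply keeping the directions of the $\Tor$ inequalities straight and verifying that the combinatorial identity precisely recovers the threshold $\binom{n+s}{s}\frac{s}{s+1}$ (rather than a looser bound).
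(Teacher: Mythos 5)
Your proposal is correct and follows essentially the same route as the paper's proof: reduce to $\Omega_{R/k}\cong J\subseteq\m^{s}$ via \Cref{lem2}, compare $\dim_{k}\Tor_{1}^{R}$ along $0\to J\to\m^{s}\to\m^{s}/J\to 0$, use \Cref{rankomega}(c) and \Cref{lem1} for the $J$ and $\m^{s}$ terms, bound the quotient term by \Cref{lengthlemma} and the length count $\lambda(\m^{s}/J)=\h(\Omega_{R/k})-\binom{n+s-1}{n}$, and finish with the binomial identity. The only cosmetic difference is that the paper also carries the nonnegative correction $D=\mu(J)-\mu(\m^{s})+\mu(\m^{s}/J)$ through the long exact sequence (which it uses later for \Cref{mainrmk}(b)), whereas you drop it immediately; for the statement at hand this is immaterial.
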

	
	\begin{proof} 
		 Suppose on the contrary that $\tau(\Omega_{ R/k})=0$. Choose an ideal $\ds J$ which realizes $\Omega_{ R/k}$; 
		by \Cref{lem2}, $ J\subseteq \m^s$. Using additivity of length, we have $ \lambda\({\m^s}/{J}\)=\hun -1-\sum_{i=1}^{s-1}\mu(\m^i).$ By \Cref{maximalidealgen}, we have $ \mu(\m^i)={n+i-1\choose i}$ for $1\leq i\leq s$ and hence using identities on binomial coefficients, we obtain 
		\begin{equation}\tag{\ref{mainthm}.1}\label{eq1}
	 \lambda\({\m^s}/{J}\)=\hun-\sum_{i=0}^{s-1}{n-1+i\choose i}=\hun-{n+s-1\choose s-1}.
		\end{equation}
		 Using \Cref{lengthlemma}, we get $\ds\dim_k\Tor^R_1\(\frac{\m^s}{J},k\)\leq \lambda\({\m^s}/{J}\)\dim_{k}\Tor^R_1(k,k)$.  Thus  from \Cref{eq1} we get,
		\begin{equation}\tag{\ref{mainthm}.2}\label{eq2}
		\dim_k\Tor^R_1\(\frac{\m^s}{J},k\)\leq n\(\hun-{n+s-1\choose s-1}\).
		\end{equation} 
		Tensoring the exact sequence $\ds 0\to J\to \m^s\to {\m^s}/{J}\to 0$  with $\ds k$, we get 
		$$\Tor^R_1(J,k)\to \Tor^R_1(\m^s,k)\to \Tor^R_1\(\frac{\m^s}{J},k\)\to J\tens k\to \m^s\tens k \to \frac{\m^s}{J}\tens k\to 0 $$ as part of a long exact sequence. Note that $D:=\mu(J)-\mu(\m^s)+\mu(\m^s/J)\geq 0$. Hence, we have
		\begin{equation}\tag{\ref{mainthm}.3}\label{eq3}
		\text{dim}_k\Tor^R_1\(\frac{\m^s}{J},k\)\geq -\text{dim}_k\Tor^R_1(J,k)+\text{dim}_k\Tor^R_1(\m^s,k)  +D.
		\end{equation}
		
		From the exact sequence $\ds 0\to \tau(\Omega_{R/k})\to \Omega_{R/k}\to J\to 0$, we get that $\ds \Omega_{R/k}\cong J$ if $\tau(\Omega_{ R/k})=0$. Hence, $\ds \text{dim}_k\Tor^R_1(J,k)\leq \mu(I)$ by \Cref{rankomega}$(c)$. Combining these observations together with \Cref{lem1}, \Cref{eq2} and \Cref{eq3}, we get 
		\begin{equation*}
		n\(\hun-{n+s-1\choose s-1}\)\geq -\text{dim}_k\Tor^R_1(J,k)+\text{dim}_k\Tor^R_1(\m^s,k)\geq  s{n+s-1\choose s+1}.
		\end{equation*}
		\text{Hence,~}
		\begin{align*}
		 \hun &\geq \frac{s(n+s-1)!}{n(s+1)!(n-2)!}+\frac{(n+s-1)!}{(s-1)!n!}\\
		&=\frac{(n+s-1)!}{(s-1)!n(n-2)!}\(\frac{1}{s+1}+\frac{1}{n-1}\)\\
		&=\frac{(n+s-1)!}{(s-1)!n!}\(\frac{n+s}{s+1}\)\\
		&=\frac{(n+s)!}{s!n!}\(\frac{s}{s+1}\)\\
		&={n+s\choose s}\(\frac{s}{s+1}\)
		\end{align*}
		This contradicts the hypothesis on $\hun$. Hence, $\tau(\Omega_{ R/k})\neq 0$.
	\end{proof}

\begin{remark}\label{mainrmk}
	We make a few observations in connection with the above proof. 
	  \begin{itemize}
	  	\item[a)] The exact sequence $\tau(\Omega_{R/k})\tens_Rk\to \Omega_{ R/k}\tens_Rk\to J\tens_Rk\to 0$ implies that $\ds \mu(\tau(\Omega_{R/k}))\geq \mu(\Omega_{ R/k})-\mu(J)=n-\mu(J)$. So, if  $\ds \Omega_{ R/k}$ surjects to an ideal $J$ which is generated by less than $n$ elements, then $\tau(\Omega_{R/k})\neq 0$.

		\item[b)] 
		 We can repeat the proof of \Cref{mainthm} up to \Cref{eq3} and use the observation that $D\geq n-{n+s-1\choose s}+\mu(\m^s/J)\geq 0$ (under the assumption that $\tau(\Omega_{ R/k})=0$). This helps us to obtain an alternate lower bound for $\hun$:{\small 
		 	\begin{align*}
		 	\hun & \geq {n+s\choose s}\frac{s}{s+1}+\frac{D}{n}
		\ge {n+s\choose s}\frac{s}{s+1}-\frac{{n+s-1\choose s}}{n}+1+\frac{1}{n}\mu(\m^s/J)\\
		&= {n+s-1\choose s-1}\frac{s^2+s(n-1)-1}{s(s+1)}+1+\frac{1}{n}\mu\({\m^s}/{J}\)
\end{align*}}Thus, $\tau(\Omega_{ R/k})\neq 0$ if $\hun<{n+s-1\choose s-1}\frac{s^2+s(n-1)-1}{s(s+1)}+1+\frac{1}{n}\mu\({\m^s}/{J}\)$.

\end{itemize}
\end{remark}

From existing results due to J. Herzog, we now immediately obtain the following corollary.

\begin{corollary}\label{maincor}
	Let	$\ds (R,\m ,k)$ be a Noetherian complete local one-dimensional domain such that $\ds R=S/I$ where $S=k[[X_1,\dots,X_n]]$, $k$ is perfect and $n\geq 2$. Assume that $\ds I\subseteq (X_1,\dots,X_n)^2$. Then the following statements hold.
	\begin{itemize} 
	
	\item[$(a)$] $\tau(\Omega_{R/k})\neq 0$ if $\ds \h(\Omega_{ R/k})=1,2$.
	
	\item[$(b)$] If $R$ is Gorenstein, then $\tau(\Omega_{R/k})\neq 0$ if $\h(\Omega_{R/k})=1,2,3$. 
\end{itemize}
\end{corollary}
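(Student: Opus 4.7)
The plan is to combine Theorem \ref{mainthm} and the sharpening in Remark \ref{mainrmk}(b) with already-known low-embedding-dimension instances of Berger's Conjecture (largely due to Herzog, see the introduction) to do a short case analysis on the pair $(n,\h(\Omega_{R/k}))$.

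Since by hypothesis $I\subseteq (X_1,\dots,X_n)^2$, I would apply Theorem \ref{mainthm} with $s=1$; the numerical threshold becomes $\binom{n+1}{1}\cdot\tfrac{1}{2}=\tfrac{n+1}{2}$, so $\tau(\Omega_{R/k})\neq 0$ whenever $\h(\Omega_{R/k})<\tfrac{n+1}{2}$. This immediately handles $\h(\Omega_{R/k})=1$ for every $n\geq 2$, the case $\h(\Omega_{R/k})=2$ for every $n\geq 4$, and (relevant for part (b)) the case $\h(\Omega_{R/k})=3$ for every $n\geq 6$. For the intermediate cases I would exploit the sharpened bound of Remark \ref{mainrmk}(b), which at $s=1$ reads
\[
\h(\Omega_{R/k}) \geq \frac{n+1}{2} + \frac{\mu(\m/J)}{n}
\]
whenever $\tau(\Omega_{R/k})=0$ and $J$ realizes $\Omega_{R/k}$ (with Lemma \ref{lem2} forcing $J\subseteq \m$). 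For $\h=2$, additivity of length gives $\lambda(\m/J)=1$, hence $\mu(\m/J)=1$, and the strict inequality $2<\tfrac{n+1}{2}+\tfrac{1}{n}$ is equivalent to $(n-1)(n-2)>0$, i.e.\ $n\geq 3$; together with Theorem \ref{mainthm} this leaves only the plane-curve case $n=2$ open for part (a). For $\h=3$ one has $\lambda(\m/J)=2$, so $\mu(\m/J)\in\{1,2\}$, and the inequality $3<\tfrac{n+1}{2}+\tfrac{\mu(\m/J)}{n}$ reduces in either event to a quadratic condition holding for $n\geq 5$; combined with Theorem \ref{mainthm}, this leaves $n\in\{2,3,4\}$ open for part (b) under the Gorenstein hypothesis.

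The remaining values are low-embedding-dimension cases for which Berger's Conjecture is already known. The plane-curve case $n=2$ is a Gorenstein hypersurface and is classical; for part (b), $n=3$ Gorenstein is a complete intersection and $n=4$ Gorenstein is a complete intersection or of Pfaffian type, and Berger's Conjecture has been established in these situations by Herzog (and others) in the papers cited in the introduction. The main obstacle I expect in carrying out this plan is the clean handling of borderline values where Remark \ref{mainrmk}(b) yields only equality (for instance $\h=3$, $n=4$, $\mu(\m/J)=2$, which gives exactly $3$): these cannot be discarded numerically and require pinning down the correct existing result of Herzog for low-embedding-dimension Gorenstein rings.
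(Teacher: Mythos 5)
Your proposal is correct and follows essentially the same route as the paper: \Cref{mainthm} at $s=1$, the sharpened bound of \Cref{mainrmk}(b) for the borderline embedding dimensions, and Herzog's low-embedding-dimension results for the leftover small $n$. The paper simply invokes Herzog up front --- Satz 3.3 of \cite{MR512469} to assume $n\geq 4$ in general and Satz 3.2 to assume $n\geq 5$ in the Gorenstein case, which is exactly the citation you were unsure how to pin down --- and then only needs \Cref{mainrmk}(b) in the single case $n=5$, $\h(\Omega_{R/k})=3$, where $J\neq \m$ forces $\mu(\m/J)\geq 1$.
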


\begin{proof}
	We can always assume that $n\geq 4$ due to \cite[Satz 3.3]{MR512469}.
Statement $(a)$ now follows immediately from  \Cref{mainthm} by taking $\ds s=1$.
	
	For $(b)$, note that by  \cite[Satz 3.2]{MR512469}, we can assume $n\geq 5$. From (a), we only need to take care of $\hun=3$. Take $s=1,n=5$ in \Cref{mainrmk}$(b)$ and note that $\ds J\neq \m$ since $\hun=3$.  
\end{proof}

\begin{remark}\label{corScheja} In \cite{scheja1970differentialmoduln}, the term \textit{quasi-homogeneous} was coined for any such 
$R$ where a surjective
R-module homomorphism $\ds \Omega_{ R/k}\to  \m$ exists. In this case, $\hun=1$ (provided $R$ is non-regular) and \Cref{maincor} can be applied. In particular, this recovers the standard graded case over a field of characteristic $0$ which was originally proved by G. Scheja \cite[Satz 9.8]{scheja1970differentialmoduln}. For further discussions on the quasi-homogeneous case, we refer the reader to \cite[2.3]{berger1994report}.  
 \end{remark}

\section{Algorithm for Explicit Computation}\label{explicitsec}
We shall now discuss an explicit way of computing $\hun$ which can help in providing examples of classes of rings where $\hun$ is relatively small compared to the embedding dimension and hence \Cref{mainthm} can be applied.

\begin{remark} \label{hcomprem1}
	From now on, we assume that $k$ is algebraically closed of characteristic $0$. Since $R$ is a one-dimensional complete local domain which is a $k$-algebra, the integral closure can be written as $\overline{R} = k[[t]]$ for some choice of uniformizing parameter $t$ \cite[Theorem 4.3.4]{MR2266432}. We use this description of $\overline{R}$ henceforth. Identifying $R$ inside $\overline{R}$, we have that each generator $x_i=x_i(t)$ of $\m$ is a power series in $t$. Further, we shall use the \textit{valuation function} $v$ on $\overline{R}$ that takes a power series in $t$ as input and outputs the lowest power of $t$ in the series. This is a discrete valuation on $k((t))$. We need the following observations.

	\begin{enumerate}  
		\item Let $\ds \mathcal{D}=Rx_1'(t)+\dots +Rx_n'(t)$ be the `fractional ideal of derivatives' obtained by differentiating each of the generators of $\m$ with respect to $t$ and then taking the $R$-span. Thus, $\mathcal{D}$ is a fractional ideal, i.e., an $R$-submodule of $k((t))$. Now $\ds \Omega_{R/k}$ surjects to $\mathcal{D}$. This follows from the chain rule of derivations if we ignore the $dt$ (see, for example, the discussion at the beginning of \cite[Section 3]{MR629286}). Multiplying by a suitable power series in $t$, we can get an ideal in $R$ which is an isomorphic copy of $\mathcal{D}$. Hence, we have constructed an ideal of $R$ to which $\Omega_{ R/k}$ surjects. 
		
		\item In the notations of \cite{berger1994report}, we have $S=\overline{R}$ and $DS=dt$. Since $S$ is regular, $SDS=\Omega_{S/k}$ is free of rank $1$ and hence can be identified with $S$. Note that this identification is obtained by simply replacing the basis vector $dt$ with $1$. Via the same isomorphism, $\mathcal{D}$ is now identified with $RDR$. Hence, $\lambda(\overline{R}/\mathcal{D})=\lambda(S/\mathcal{D})=\lambda({SDS}/RDR)$.
		
		 With these compatible identifications, it follows from the discussion in \cite[2.2]{berger1994report} that $\lambda(\overline{R}/R)\geq \lambda(\overline{R}/\mathcal{D})$. In fact, the case where equality holds is referred to as \textit{maximal torsion}. In this case, the conjecture is proven to be true. We refer the reader to \cite[2.2]{berger1994report} for further details.
		
\end{enumerate}
\end{remark}

\begin{theorem}[Explicit Computation of $\hun$]\label{explicitcomp}
	Let	$\ds (R,\m ,k)$ be a Noetherian complete local one-dimensional domain such that $\ds R=S/I$ where $S=k[[X_1,\dots,X_n]]$, $k$ is algebraically closed of characteristic $0$, $n\geq 2$ and let $K$ be the fraction field of $R$. Assume $\ds I\subseteq (X_1,\dots,X_n)^2$. Suppose that $\overline{R}=k[[t]]$ with valuation function $v$.  
	Write $x_i(t)$ to denote the image of $X_i$ in $\overline{R}$ and set $x_i'(t)=\frac{d}{dt}x_i(t)$. Finally let $\mathcal{D}=Rx_1'(t)+\dots +Rx_n'(t)$ and $v(\mathcal{D}^{-1}):=\min\{v(\alpha)~|~\alpha\in K, \alpha \mathcal{D}\subseteq R\}$. Then we get $$\hun=\lambda(\overline{R}/\mathcal{D})-\lambda(\overline{R}/R)+v(\mathcal{D}^{-1})\leq v(\mathcal{D}^{-1}).$$
		If the rightmost inequality becomes an equality, then Berger's Conjecture is true.
\end{theorem}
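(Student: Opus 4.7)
The strategy is to identify $\Omega_{R/k}$ modulo torsion with the fractional ideal $\mathcal{D}$, then parametrize all homomorphisms $\Omega_{R/k}\to R$ via elements of $\mathcal{D}^{-1}$, and compute the resulting colengths using the DVR structure of $\overline{R}$. First, I would use the universal property of $\Omega_{R/k}$: the formal $t$-derivative $R\hookrightarrow\overline{R}\xrightarrow{d/dt}\overline{R}$ is a $k$-derivation whose image is the $R$-submodule generated by the $x_i'(t)$, namely $\mathcal{D}$. This produces a surjection $\Omega_{R/k}\twoheadrightarrow\mathcal{D}$ sending $dx_i\mapsto x_i'(t)$. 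Because $\mathcal{D}$ is torsion-free of rank one and $\Omega_{R/k}$ also has rank one, the kernel is exactly $\tau(\Omega_{R/k})$, so $\Omega_{R/k}/\tau(\Omega_{R/k})\cong\mathcal{D}$. Since $R$ is torsion-free, every homomorphism $\Omega_{R/k}\to R$ kills $\tau(\Omega_{R/k})$ and thus factors through $\mathcal{D}$. By \Cref{convention_remark}, $\Hom_R(\mathcal{D},R)\cong R:_K\mathcal{D}=\mathcal{D}^{-1}$ via multiplication by $\alpha$, so the possible images are precisely the ideals $\alpha\mathcal{D}$ with $\alpha\in\mathcal{D}^{-1}$; hence $\h(\Omega_{R/k})=\min_{\alpha\in\mathcal{D}^{-1}\setminus\{0\}}\lambda(R/\alpha\mathcal{D})$.

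Next, I would compute $\lambda(R/\alpha\mathcal{D})$ for $\alpha\in\mathcal{D}^{-1}$. From the tower $\alpha\mathcal{D}\subseteq R\subseteq\overline{R}$, additivity of length gives $\lambda(R/\alpha\mathcal{D})=\lambda(\overline{R}/\alpha\mathcal{D})-\lambda(\overline{R}/R)$. To handle the middle term uniformly (whether $v(\alpha)$ is positive or negative), I would interpolate with the $\overline{R}$-submodule $\alpha\mathcal{D}\overline{R}$. Since $\overline{R}$ is a DVR, $\mathcal{D}\overline{R}=t^e\overline{R}$ with $e=v(\mathcal{D})\geq 0$, and $\alpha\mathcal{D}\overline{R}=\alpha t^e\overline{R}\subseteq\overline{R}$ (because $\overline{R}$ is integral over $R$, so $\alpha\mathcal{D}\subseteq R$ forces $\alpha\mathcal{D}\overline{R}\subseteq\overline{R}$). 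Then
\[
\lambda(\overline{R}/\alpha\mathcal{D}) = \lambda(\overline{R}/\alpha t^e\overline{R}) + \lambda(\alpha t^e\overline{R}/\alpha\mathcal{D}) = \bigl(v(\alpha)+e\bigr) + \lambda(t^e\overline{R}/\mathcal{D}) = v(\alpha)+\lambda(\overline{R}/\mathcal{D}),
\]
where I use multiplication by $\alpha$ as an $R$-isomorphism between $t^e\overline{R}/\mathcal{D}$ and $\alpha t^e\overline{R}/\alpha\mathcal{D}$, and additivity on $\overline{R}\supseteq t^e\overline{R}\supseteq\mathcal{D}$. Combining, $\lambda(R/\alpha\mathcal{D})=v(\alpha)+\lambda(\overline{R}/\mathcal{D})-\lambda(\overline{R}/R)$, so minimizing the only $\alpha$-dependent summand over $\alpha\in\mathcal{D}^{-1}$ yields the asserted identity $\h(\Omega_{R/k})=v(\mathcal{D}^{-1})+\lambda(\overline{R}/\mathcal{D})-\lambda(\overline{R}/R)$.

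The inequality $\h(\Omega_{R/k})\leq v(\mathcal{D}^{-1})$ is then equivalent to $\lambda(\overline{R}/\mathcal{D})\leq\lambda(\overline{R}/R)$, which is the observation recorded in \Cref{hcomprem1}(2) and attributed to the discussion in \cite[2.2]{berger1994report}. Equality $\h(\Omega_{R/k})=v(\mathcal{D}^{-1})$ therefore forces $\lambda(\overline{R}/R)=\lambda(\overline{R}/\mathcal{D})$, which is precisely the \emph{maximal torsion} condition; Berger's conjecture is already known to hold in this case by \cite[2.2]{berger1994report}, finishing the proof. The main subtlety is the length computation in the previous paragraph: one must be careful because $\alpha\in\mathcal{D}^{-1}$ need not lie in $\overline{R}$ (only $v(\alpha)\geq -e$ is guaranteed), so the interpolating module $\alpha\mathcal{D}\overline{R}$ must be chosen correctly so that the chain $\alpha\mathcal{D}\subseteq\alpha\mathcal{D}\overline{R}\subseteq\overline{R}$ is valid and the DVR-length of $\alpha\mathcal{D}\overline{R}=\alpha t^e\overline{R}$ inside $\overline{R}$ comes out to exactly $v(\alpha)+e$.
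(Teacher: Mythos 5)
Your proposal is correct and shares the same overall architecture as the paper's proof: identify $\Omega_{R/k}/\tau(\Omega_{R/k})$ with the fractional ideal $\mathcal{D}$, observe that every homomorphism $\Omega_{R/k}\to R$ factors through $\mathcal{D}$ so that the possible images are exactly $\alpha\mathcal{D}$ for $\alpha\in\mathcal{D}^{-1}$, and minimize the colength over $\alpha$. Where you diverge is in the length computation. The paper writes the target ideal as $J=\frac{a}{b}\mathcal{D}$ with $a,b\in R$, chases additivity through the two chains $bJ\subseteq bR\subseteq R$ and $a\mathcal{D}\subseteq R\subseteq\overline{R}$, and then uses the multiplicity fact $\lambda(\mathcal{D}/a\mathcal{D})=\lambda(R/aR)$ (since $\mathcal{D}$ is MCM of rank one, \cite[Cor.\ 4.6.11(c)]{bruns_herzog_1998}) together with \Cref{prop.lem1} to turn $\lambda(R/aR)$ and $\lambda(R/bR)$ into $v(a)$ and $v(b)$. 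You instead interpolate with $\alpha\mathcal{D}\overline{R}=\alpha t^e\overline{R}$ where $e=v(\mathcal{D})$, reducing everything to a direct DVR length count in $\overline{R}$ and sidestepping both the multiplicity appeal and \Cref{prop.lem1}; this is a bit leaner and makes transparent where the formula $\lambda(R/\alpha\mathcal{D})=v(\alpha)+\lambda(\overline{R}/\mathcal{D})-\lambda(\overline{R}/R)$ comes from. You also correctly flag the one genuine subtlety, namely that $\alpha$ need not lie in $\overline{R}$ but $v(\alpha)\geq -e$ is forced by $\alpha\mathcal{D}\subseteq R$, so the interpolating chain is still a chain of submodules of $\overline{R}$. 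The treatment of the final inequality and of the maximal-torsion equality case matches the paper.
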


\begin{proof} Since $\Rank_R(\Omega_{ R/k})=1$, we know that any ideal $J$, which is a surjective image of $\Omega_{ R/k}$, is isomorphic to $\Omega_{ R/k}/\tau(\Omega_{ R/k})$.
	 From \Cref{hcomprem1}(1), we have that $\mathcal{D}$ (considered as an $R$-submodule of $K$) is isomorphic to some ideal to which $\Omega_{ R/k}$ surjects. Hence, $\mathcal{D}\cong \Omega_{ R/k}/\tau(\Omega_{ R/k})$ and so  $\mathcal{D}$ is isomorphic to any ideal in $R$ to which $\Omega_{ R/k}$ surjects.
	 
	  Choose any such $J$ and let $J=\frac{a}{b}\mathcal{D}$ for some $a,b\in R$ (\Cref{rem3.1}). Using the inclusions $bJ\subseteq bR\subseteq R$, we obtain that $\lambda(R/bJ)=\lambda(R/J)+\lambda(R/bR)$. Similarly using $a\mathcal{D}\subseteq R\subseteq \overline{R}$, we get  $\lambda(R/a\mathcal{D})=\lambda(\overline{R}/a\mathcal{D})-\lambda(\overline{R}/R)$. Since $bJ=a\mathcal{D}$, we get $$\lambda(R/J)=\lambda(\overline{R}/a\mathcal{D})-\lambda(\overline{R}/R)-\lambda(R/bR)=\lambda(\overline{R}/\mathcal{D})+\lambda(\mathcal{D}/a\mathcal{D})-\lambda(\overline{R}/R)-\lambda(R/bR).$$ Since $\mathcal{D}$ is maximal Cohen-Macaulay of rank $1$ over $R$, we get $\lambda(\mathcal{D}/a\mathcal{D})=\lambda(R/aR)$ \cite[Corollary 4.6.11(c)]{bruns_herzog_1998} and hence we obtain that $$\lambda(R/J)=\lambda(\overline{R}/\mathcal{D})-\lambda(\overline{R}/R)+\lambda(R/aR)-\lambda(R/bR).$$ Using \Cref{prop.lem1}, we get that \begin{equation}\tag{\ref{explicitcomp}.1}\label{lengthcompeq}\lambda(R/J)=\lambda(\overline{R}/\mathcal{D})-\lambda(\overline{R}/R)+v(a)-v(b).\end{equation}
	By \Cref{hcomprem1}(2) we have that $\lambda(\overline{R}/\mathcal{D})\leq \lambda(\overline{R}/R)$ and so, taking the minimum over all such $J$'s, we obtain that $$\hun=\lambda(\overline{R}/\mathcal{D})-\lambda(\overline{R}/R)+v(D^{-1}).$$ The last statement again follows from \Cref{hcomprem1}(2).
\end{proof}

\begin{corollary}\label{explicitcompcor}
	Let	$\ds (R,\m ,k)$ be a Noetherian complete local one-dimensional domain such that $\ds R=S/I$ where $S=k[[X_1,\dots,X_n]]$, $k$ is algebraically closed of characteristic $0$, $n\geq 2$ and let $K$ be the fraction field of $R$. Assume $\ds I\subseteq (X_1,\dots,X_n)^{s+1}$ for $s\geq 1$. Suppose $\overline{R}=k[[t]]$ with valuation function $v$. Write $x_i(t)$ to denote the image of $X_i$ in $\overline{R}$ and set $x_i'(t)=\frac{d}{dt}x_i(t)$.  Finally let $\mathcal{D}=Rx_1'(t)+\dots +Rx_n'(t)$ and $v(\mathcal{D}^{-1})=\min\{v(\alpha)~|~\alpha\in K, \alpha \mathcal{D}\subseteq R\}$. If $$v(\mathcal{D}^{-1})< {n+s\choose s}\frac{s}{s+1}+\lambda(\overline{R}/R)-\lambda(\overline{R}/\mathcal{D}),$$\text{~then~}$\tau(\Omega_{R/k})\neq 0.$
\end{corollary}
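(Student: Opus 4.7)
The plan is to derive this corollary as a direct combination of Theorem \ref{explicitcomp} and Theorem \ref{mainthm}. Indeed, the corollary adds a hypothesis $I \subseteq (X_1,\dots,X_n)^{s+1}$ (with $s \geq 1$) to the setup of Theorem \ref{explicitcomp}, and its conclusion is about nonvanishing of torsion in $\Omega_{R/k}$, which is the domain of Theorem \ref{mainthm}. So the whole point is to chain these two results.

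First I would invoke Theorem \ref{explicitcomp}, which gives the exact formula
\[
\h(\Omega_{R/k}) \;=\; \lambda(\overline{R}/\mathcal{D}) - \lambda(\overline{R}/R) + v(\mathcal{D}^{-1}).
\]
Note that the setup of Theorem \ref{explicitcomp} requires only $I\subseteq (X_1,\dots,X_n)^2$, which is implied by our hypothesis $I\subseteq (X_1,\dots,X_n)^{s+1}$ with $s\ge 1$, so the formula applies verbatim.

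Next I would rearrange the hypothesis of the corollary, namely
\[
v(\mathcal{D}^{-1}) \;<\; \binom{n+s}{s}\frac{s}{s+1} + \lambda(\overline{R}/R) - \lambda(\overline{R}/\mathcal{D}),
\]
by moving the length terms to the left-hand side, to obtain
\[
\lambda(\overline{R}/\mathcal{D}) - \lambda(\overline{R}/R) + v(\mathcal{D}^{-1}) \;<\; \binom{n+s}{s}\frac{s}{s+1}.
\]
By the formula from Theorem \ref{explicitcomp}, the left-hand side is exactly $\h(\Omega_{R/k})$, so the inequality is precisely $\h(\Omega_{R/k}) < \binom{n+s}{s}\frac{s}{s+1}$.

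Finally, since all the standing hypotheses of Theorem \ref{mainthm} are in place (complete local one-dimensional domain with perfect residue field, $n\ge 2$, and $I\subseteq \mathfrak{n}^{s+1}$), I would apply Theorem \ref{mainthm} directly to conclude $\tau(\Omega_{R/k})\neq 0$. There is no real obstacle here beyond verifying that the hypotheses line up: the only subtlety is checking that the non-negativity $\lambda(\overline{R}/R) \geq \lambda(\overline{R}/\mathcal{D})$ from Remark \ref{hcomprem1}(2) is not needed for the implication, only the inequality itself is. The proof is therefore essentially a one-line deduction from the two theorems.
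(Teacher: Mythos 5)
Your proposal is correct and matches the paper's proof, which likewise deduces the corollary immediately by combining the formula $\h(\Omega_{R/k})=\lambda(\overline{R}/\mathcal{D})-\lambda(\overline{R}/R)+v(\mathcal{D}^{-1})$ from Theorem \ref{explicitcomp} with the torsion criterion of Theorem \ref{mainthm}. Your extra check that the hypotheses align (in particular $I\subseteq \n^{s+1}\subseteq \n^2$) is exactly the right, if routine, verification.
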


\begin{proof}
	The proof is immediate from \Cref{mainthm} and \Cref{explicitcomp}.
\end{proof}

\subsection{Ways to make the above computations feasible on software}\label{hcomprem2}
	We specify two ways of making the above computation feasible on software. Recall that we are in the situation where $\overline{R}=k[[t]]$.
	
	\begin{enumerate}
		\item $\ds \lambda(\overline{R}/R)$ and $\lambda(\overline{R}/\mathcal{D})$ can be computed using \textit{suitable valuation semi-groups} -- significant discussions on this exist in the literature (see for instance, \cite[Proposition 2.9]{herzog1971wertehalbgruppe},\cite{yoshino1986torsion},\cite{MR1129516}) (namely, we need to count the missing valuations from the valuation semi-groups of $R$ and $\mathcal{D}$, respectively).
		
		\item Since $R$ is complete and $\overline{R}=k[[t]]$, the conductor $\cC=t^c \overline{R}$ where $c$ is the least integer such that $t^{c-1}\not \in R$ but $t^i\in R$ for all $i\geq c$. Hence $t^c\mathcal{D}$ is an ideal of $R$. Now substitute  $J=t^c\mathcal{D}$ in \Cref{lengthcompeq} to get $\lambda(R/t^c\mathcal{D})=\lambda(\overline{R}/\mathcal{D})-\lambda(\overline{R}/R)+c$. (Note that this calculation also shows that $\lambda({R}/t^c\mathcal{D})\leq c$ by \Cref{hcomprem1}(2).) 
		
		 So, from \Cref{lengthcompeq} and the above calculation we get that for any ideal $J=\frac{a}{b}\mathcal{D}$, $\lambda(R/J)=\lambda(R/t^c\mathcal{D})+(v(a)-v(b))-c.$ Hence, $$\hun=\lambda(R/t^c\mathcal{D})-c+v(\mathcal{D}^{-1}).$$

		\item Most of the time on Macaulay 2, the length is computable and so is the conductor valuation $c$. To compute $v(\mathcal{D}^{-1})$, one can proceed as follows:

		 -- By \Cref{trace_rem1}, $\text{tr}_R(\mathcal{D})=\text{tr}_R(\Omega_{R/k})$. 
		 
		 -- The latter can be computed using \Cref{vasconcelos}$(b)$. Then extract the least valuation (say $b$). 
		 
		 -- Note that $v(\mathcal{D})=v(x_i'(t))$ where $i$ is such that $x_i(t)$ has the least valuation. Using \Cref{vasconcelos}$(c)$, $v(\mathcal{D}^{-1})=b-v(\mathcal{D})$.

	\end{enumerate}

\vspace{1em}
The above discussions provide an efficient way of checking whether \Cref{mainthm} can be applied to a given instance of verifying Berger's Conjecture as well as simply computing $\hun$ or $v(\mathcal{D}^{-1})$. The following examples which were computed using Macaulay 2, illustrate this. 

\begin{example}\label{example2}
	$ R=\mathbb{C}[[t^8+t^9,64t^{10}-81t^{12},8t^{12}-9t^{13},t^{14},t^{15},t^{16},t^{17}]]={\mathbb{C}[[x,y,z,w,u,v,p]]}/{I}$ where $I$ is the defining ideal which has $21$ generators. Moreover,
	$\text{tr}_R(\Omega_{ R/k})=(p,v,u,w,z,y, x^2)$ and so
				$v(\text{tr}_R(\Omega_{ R/k}))=10$. Hence,  $v(\mathcal{D}^{-1})=10-7=3< \frac{7+1}{2}$. Hence, $\tau(\Omega_{R/k})\neq 0$ by \Cref{explicitcompcor}.
		
		In fact, here $c=14$ and Macaulay 2 computations on the ideal
		{\scriptsize $t^c\mathcal{D}=(z\,p,y\,p,w\,v,z\,v,71\,y\,v-2048\,x\,p,
		11360\,x\,u-11360\,x\,v-3237\,x\,p,355\,y^{2}-181760\,x\,z+2249280
		\,x\,w-2329155\,x\,v-597051\,x\,p)$} show that $\lambda(R/t^c\mathcal{D})=14$. Hence, $\hun=3$ (see \Cref{hcomprem2}(2)).

\end{example}

\begin{example}\label{example3}
Let $R=\mathbb{C}[[t^9,t^{14}+t^{15},t^{17},t^{29}]]=\mathbb{C}[[x,y,z,w]]/I$ where $I$ has $12$ generators. Here $\Omega_{ R/k}$  surjects to the ideal $$t^c\mathcal{D}=(x^{3}w,z^{4},15\,x\,y^{2}z-14\,x^{2}z^{2}+
14\,x\,y\,w-30\,x\,z\,w,x^{2}y^{2}-2\,x^{2}w-z\,w)$$ whose colength is $37$. Moreover, $\text{tr}_R(\Omega_{ R/k})=( w, z^2, yz, y^2+14xz, x^2z, x^2y, x^3+xz)$ whose valuation is $26$. Thus, $v(\mathcal{D}^{-1})=26-8=18$
and $\hun=37-40+18=15$. 
\end{example}

\begin{example}\label{example4}
	Let $R=\mathbb{C}[[t^4+t^5,t^9]]=\ds \frac{\mathbb{C}[[x,y]]}{(x^{9}-9\,x^{7}y+27\,x^{5}y^{2}-30\,x^{3}y^{3
	}+9\,x\,y^{4}-y^{5}-y^{4})}$. Further $\text{tr}_R(\Omega_{ R/k})=${\scriptsize $(51\,x^{4}+35\,x^{3}y+25\,x^{2}y^{2}+16\,x^{
3}-178\,x^{2}y-90\,x\,y^{2}-25\,y^{3}-12\,x\,y+76\,y^{2},125\,x^{3
}y^{2}+16\,x^{4}-109\,x^{3}y-20\,x^{2}y^{2}-250\,x\,y^{3}-28\,x^{2
}y+218\,x\,y^{2}+20\,y^{3}-4\,y^{2},25\,x^{4}y-16\,x^{4}+9\,x^{3}y
-55\,x^{2}y^{2}+28\,x^{2}y-18\,x\,y^{2}+5\,y^{3}+4\,y^{2},25\,x^{5
}+29\,x^{4}-55\,x^{3}y-16\,x^{3}-62\,x^{2}y+5\,x\,y^{2}+12\,x\,y+4
\,y^{2})$} and consequently we compute $v(\mathcal{D}^{-1})=12-3=9$. 

Finally note that $\lambda(\overline{R}/\mathcal{D})=9$ (the valuations missing from the value semi-group of $\mathcal{D}$ are $\{0,1,2,4,5,6,9,10,14\}$) and $\lambda(\overline{R}/R)=12$ (missing valuations from the semi-group of $R$ are $\{1,2,3,5,6,7,10,11,14,15,19,23\}$.) Thus from \Cref{explicitcomp}, we get that $\hun=6$.
\end{example}

\section*{Conclusion}\label{conclusion} We conclude with the following observations and questions. 

\begin{enumerate}

	\item \Cref{prop.propo1} can be used to conclude that in a one-dimensional complete local domain, any non-zero reflexive ideal is isomorphic to an ideal containing the conductor. For a more general statement, see \cite[Theorem 3.5]{dao2021reflexive}.
	
		\item It is natural to ask if $(a)$ implies $(b)$ in \Cref{prop.propo1} without the $DVR$ assumption.
	
	\item If $R$ is quasi-homogeneous, then  $\text{tr}_R(\Omega_{ R/k})=\m$. 
	\begin{proof} Since $R$ is not regular, $\m=\text{tr}_R(\m)$. Since $R$ is quasi-homogeneous, we are done by \Cref{trace_rem1}.
	\end{proof} 
It is natural to ask if this condition is also sufficient.

	\item $\ds \h(\omega)$ itself can be an interesting object of study. We know that it is $0$ if and only if $R$ is Gorenstein. Since it is an invariant of the ring, a further study of $\h(\omega)$ may help in classification problems.

\end{enumerate}

\section*{Acknowledgement} 
I am highly grateful to Prof. Craig Huneke for extremely helpful conversations regarding the ideas and proofs in this paper. I am also indebted to Vivek Mukundan for not only helping me write Macaulay2 codes to check examples but also for valuable conversations. I would like to thank Luis N\'{u}\~{n}ez-Betancourt for pointing out the references of Teissier and Hironaka. I am thankful to Prof. Charles Weibel for his helpful comments on this article. I am also immensely grateful to the anonymous referee for their suggestions in improving the exposition of this article.


\begin{thebibliography}{10}
	
	\bibitem{MR470245}
	{\sc R.~Bassein}, {\em On smoothable curve singularities: local methods}, Math.
	Ann., 230 (1977), pp.~273--277.
	
	\bibitem{MR152546}
	{\sc R.~W. Berger}, {\em Differentialmoduln eindimensionaler lokaler {R}inge},
	Math. Z., 81 (1963), pp.~326--354.
	
	\bibitem{MR948266}
	\leavevmode\vrule height 2pt depth -1.6pt width 23pt, {\em On the torsion of
		the differential module of a curve singularity}, Arch. Math. (Basel), 50
	(1988), pp.~526--533.
	
	\bibitem{berger1994report}
	\leavevmode\vrule height 2pt depth -1.6pt width 23pt, {\em Report on the
		Torsion of Differential Module of an Algebraic Curve}, Springer New York, New
	York, NY, 1994, pp.~285--303.
	
	\bibitem{brodmann2012local}
	{\sc M.~P. Brodmann and R.~Y. Sharp}, {\em Local cohomology: an algebraic
		introduction with geometric applications}, vol.~136, Cambridge university
	press, 2012.
	
	\bibitem{bruns_herzog_1998}
	{\sc W.~Bruns and J.~Herzog}, {\em Cohen-Macaulay Rings}, Cambridge Studies in
	Advanced Mathematics, Cambridge University Press, 2~ed., 1998.
	
	\bibitem{bruns2006determinantal}
	{\sc W.~Bruns and U.~Vetter}, {\em Determinantal rings}, vol.~1327, Springer,
	2006.
	
	\bibitem{MR571575}
	{\sc R.-O. Buchweitz and G.-M. Greuel}, {\em The {M}ilnor number and
		deformations of complex curve singularities}, Invent. Math., 58 (1980),
	pp.~241--281.
	
	\bibitem{MR1637796}
	{\sc G.~Corti\~{n}as, S.~C. Geller, and C.~A. Weibel}, {\em The {A}rtinian
		{B}erger conjecture}, Math. Z., 228 (1998), pp.~569--588.
	
	\bibitem{MR1679664}
	{\sc G.~Corti\~{n}as and F.~Krongold}, {\em Artinian algebras and differential
		forms}, Comm. Algebra, 27 (1999), pp.~1711--1716.
	
	\bibitem{dao2021reflexive}
	{\sc H.~Dao, S.~Maitra, and P.~Sridhar}, {\em On Reflexive and $I$-Ulrich
		modules over curve singularities}, arXiv preprint arXiv:2101.02641,  (2021).
	
	\bibitem{MR2103875}
	{\sc D.~Eisenbud}, {\em The geometry of syzygies: a second course in
		{A}lgebraic Geometry and Commutative Algebra}, vol.~229, Springer Science \&
	Business Media, 2005.
	
	\bibitem{eisenbud2013commutative}
	\leavevmode\vrule height 2pt depth -1.6pt width 23pt, {\em Commutative
		{A}lgebra: with a view toward {A}lgebraic {G}eometry}, vol.~150, Springer
	Science \& Business Media, 2013.
	
	\bibitem{Guttes1990}
	{\sc K.~G{\"u}ttes}, {\em Zum {T}orsionsproblem bei
		{K}urvensingularit{\"a}ten}, Archiv der Mathematik, 54 (1990), pp.~499--510.
	
	\bibitem{MR512469}
	{\sc J.~Herzog}, {\em Ein {C}ohen-{M}acaulay-{K}riterium mit {A}nwendungen auf
		den {K}onormalenmodul und den {D}ifferentialmodul}, Math. Z., 163 (1978),
	pp.~149--162.
	
	\bibitem{herzog1994module}
	{\sc J.~Herzog}, {\em The module of differentials}, in lecture notes from
	Workshop on Commutative Algebra and its Relation to Combinatorics and
	Computer Algebra (Trieste, 1994), 1994.
	
	\bibitem{MR4013970}
	{\sc J.~Herzog, T.~Hibi, and D.~I. Stamate}, {\em The trace of the canonical
		module}, Israel J. Math., 233 (2019), pp.~133--165.
	
	\bibitem{herzog1971wertehalbgruppe}
	{\sc J.~Herzog and E.~Kunz}, {\em Die {W}ertehalbgruppe eines lokalen {R}ings
		der {D}imension 1}, in Die Wertehalbgruppe eines lokalen Rings der Dimension
	I, Springer, 1971, pp.~3--43.
	
	\bibitem{Herzog1984DifferentialsOL}
	{\sc J.~Herzog and R.~Waldi}, {\em Differentials of linked curve
		singularities}, Archiv der Mathematik, 42 (1984), pp.~335--343.
	
	\bibitem{MR836868}
	{\sc J.~Herzog and R.~Waldi}, {\em Cotangent functors of curve singularities},
	Manuscripta Math., 55 (1986), pp.~307--341.
	
	\bibitem{hironaka1983nash}
	{\sc H.~Hironaka}, {\em On {N}ash blowing-up}, in Arithmetic and geometry,
	Springer, 1983, pp.~103--111.
	
	\bibitem{MR1035346}
	{\sc R.~H\"{u}bl}, {\em A note on the torsion of differential forms}, Arch.
	Math. (Basel), 54 (1990), pp.~142--145.
	
	\bibitem{MR1129516}
	{\sc S.~Isogawa}, {\em On {B}erger's conjecture about one-dimensional local
		rings}, Arch. Math. (Basel), 57 (1991), pp.~432--437.
	
	\bibitem{kobayashi2019rings}
	{\sc T.~Kobayashi and R.~Takahashi}, {\em Rings whose ideals are isomorphic to
		trace ideals}, Mathematische Nachrichten, 292 (2019), pp.~2252--2261.
	
	\bibitem{MR754335}
	{\sc J.~Koch}, {\em \"{U}ber die {T}orsion des {D}ifferentialmoduls von
		{K}urvensingularit\"{a}ten}, vol.~5 of Regensburger Mathematische Schriften
	[Regensburg Mathematical Publications], Universit\"{a}t Regensburg,
	Fachbereich Mathematik, Regensburg, 1983.
	
	\bibitem{MR864975}
	{\sc E.~Kunz}, {\em K\"{a}hler differentials}, Advanced Lectures in
	Mathematics, Friedr. Vieweg \& Sohn, Braunschweig, 1986.
	
	\bibitem{MR3646286}
	{\sc H.~Lindo}, {\em Trace ideals and centers of endomorphism rings of modules
		over commutative rings}, J. Algebra, 482 (2017), pp.~102--130.
	
	\bibitem{pohl1989torsion1}
	{\sc T.~Pohl}, {\em Torsion des {D}ifferentialmoduls von
		{K}urvensingularit{\"a}ten mit maximaler {H}ilbertfunktion}, Archiv der
	Mathematik, 52 (1989), pp.~53--60.
	
	\bibitem{pohl1989torsion}
	\leavevmode\vrule height 2pt depth -1.6pt width 23pt, {\em {\"U}ber die
		{T}orsion des {D}ifferentialmoduls von {K}urvensingularit{\"a}ten}, PhD
	thesis, Verlag nicht ermittelbar, 1989.
	
	\bibitem{MR1129517}
	\leavevmode\vrule height 2pt depth -1.6pt width 23pt, {\em Differential modules
		with maximal torsion}, Arch. Math. (Basel), 57 (1991), pp.~438--445.
	
	\bibitem{scheja1970differentialmoduln}
	{\sc G.~Scheja}, {\em Differentialmoduln lokaler analytischer {A}lgebren,
		{S}chriftenreihe {M}ath}, Inst. Univ. Fribourg, Univ. Fribourg, Switzerland,
	(1970).
	
	\bibitem{MR2266432}
	{\sc I.~Swanson and C.~Huneke}, {\em Integral closure of ideals, rings, and
		modules}, vol.~336 of London Mathematical Society Lecture Note Series,
	Cambridge University Press, Cambridge, 2006.
	
	\bibitem{teissier1976hunting}
	{\sc B.~Teissier}, {\em The hunting of invariants in the geometry of
		discriminants}, Real and complex singularities, Oslo,  (1976), pp.~565--677.
	
	\bibitem{MR629286}
	{\sc B.~Ulrich}, {\em Torsion des {D}ifferentialmoduls und {K}otangentenmodul
		von {K}urvensingularit\"{a}ten}, Arch. Math. (Basel), 36 (1981),
	pp.~510--523.
	
	\bibitem{MR1055780}
	{\sc W.~V. Vasconcelos}, {\em Computing the integral closure of an affine
		domain}, Proc. Amer. Math. Soc., 113 (1991), pp.~633--638.
	
	\bibitem{yoshino1986torsion}
	{\sc Y.~Yoshino}, {\em Torsion of the differential modules and the value
		semigroups of one dimensional local rings}, Mathematics reports, 9 (1986),
	pp.~83--96.
	
\end{thebibliography}

\end{document}